\documentclass[11pt]{amsart}
\usepackage[margin=1.25in]{geometry} 
\usepackage[T1]{fontenc} 

\usepackage[utf8]{inputenc}
\usepackage[T1]{fontenc}
\usepackage{amsmath}
\usepackage{amssymb}
\usepackage{amsthm}
\usepackage{hyperref}
\usepackage{mathtools}
\theoremstyle{plain}
\newtheorem{theorem}{Theorem}[section]
\newtheorem{proposition}[theorem]{Proposition}
\newtheorem{lemma}[theorem]{Lemma}
\newtheorem{corollary}[theorem]{Corollary}

\theoremstyle{definition}
\newtheorem{definition}[theorem]{Definition}
\newtheorem{example}[theorem]{Example}

\theoremstyle{remark}
\newtheorem{remark}[theorem]{Remark}


\DeclareMathOperator{\Hom}{Hom}

\title[A Note on the Eilenberg-Mac Lane Isomorphism for Quadratic Forms]{A Note on the Eilenberg-Mac Lane Isomorphism for Quadratic Forms}

\author[C. Galindo]{C\'esar Galindo}
\address{Departamento de Matem\'aticas, Universidad de los Andes, Bogot\'a, Colombia}\email{cn.galindo1116@uniandes.edu.co}

\date{\today}

\keywords{Abelian 3-cocycles, quadratic forms, Eilenberg-Mac Lane theorem, braided categorical groups, abelian anyons}

\begin{document}
\dedicatory{To Zhenghan Wang on his 60th birthday with admiration}
\thanks{The author was partially supported by Grant INV-2025-213-3452 from the School of Science of Universidad de los Andes.}
\begin{abstract}
We give an elementary proof of the Eilenberg-Mac Lane trace isomorphism between the third 2-abelian cohomology group and quadratic forms. Our approach yields explicit constructions and we characterize when quadratic forms can be expressed as traces of bilinear forms for arbitrary coefficient groups.
\end{abstract}

\maketitle

\section{Introduction}

The third abelian cohomology groups $H^3_{k\text{-ab}}(G,M)$ for $k \geq 2$ appear in algebraic topology as the second Postnikov invariant of connected homotopy 2-types, in the classification of pointed braided fusion categories \cite{DGNO10}, and in vertex operator algebra theory, where module categories of lattice VOAs correspond to pointed modular categories whose structure involves quadratic forms \cite{gannon2024orbifolds}. When additional symmetry conditions are imposed, leading to symmetric categorical groups, the relevant cohomology group becomes $H^3_{3\text{-ab}}(G,M)$.

Elements of $H^3_{2\text{-ab}}(G,M)$ are called \emph{2-abelian 3-cocycles} to distinguish them from standard group cohomology. These determine pointed braided fusion categories and play a role in constructing other classes of braided fusion categories \cite{ENO11, Natale18}.

A fundamental result of Eilenberg and Mac Lane \cite{EM53} establishes an isomorphism 
\begin{equation*}
\operatorname{Tr}: H^3_{2\text{-ab}}(G,M) \xrightarrow{\sim} \operatorname{Quad}(G,M)
\end{equation*}
between the third 2-abelian cohomology group and the group of quadratic forms. Recently, some authors have presented elementary arguments for the surjectivity of this map (see \cite[Section 11]{Braunling21}, \cite[Appendix D]{DGNO10}, \cite[Proposition 2.5.1]{Quinn}, and \cite[Theorem 12]{JS86report}).

The goal of this note is to present another proof of the Eilenberg-Mac Lane isomorphism. Our method relies on two observations about how quadratic forms and 2-abelian 3-cocycles behave under direct sums (Lemmas \ref{lem:direct_sum_decomposition} and \ref{lem:direct_sum_construction}), allowing us to reduce both surjectivity and injectivity arguments to cyclic groups. This approach provides explicit constructions for the correspondence and addresses a natural question about the representability of quadratic forms.

We characterize when quadratic forms can be expressed as traces of bilinear forms for arbitrary coefficient groups. Using the exact sequence connecting bilinear forms, quadratic forms, and cohomology, we show that this occurs precisely when certain torsion conditions are satisfied (Theorem \ref{thm:individual_quadratic_criterion}). 

This paper is organized as follows. Section \ref{sec:preliminaries} reviews the definitions and basic properties of 2-abelian 3-cocycles and quadratic forms. Section \ref{sec:EM_theorem} contains our proof of the main theorem for finitely generated abelian groups using the decomposition method. Section \ref{sec: general group} extends our results to arbitrary abelian groups by establishing that $H^3_{2\text{-ab}}(-,M)$ preserves filtered colimits. Section \ref{sec:applications} presents applications to the representability problem for quadratic forms and connections to homotopy theory.

\section{Preliminaries}\label{sec:preliminaries}

\subsection{Abelian cohomology and the Eilenberg-Mac Lane complexes}

In \cite{EM53, EM54}, Eilenberg and Mac Lane defined cohomology groups
\begin{equation*}
H^n_{k\text{-ab}}(G, M) := H^{n+1}( K(G,k), M),
\end{equation*}
where $K(G,k)$ denotes an Eilenberg-Mac Lane space. We call these the \emph{$n$-th $k$-abelian cohomology groups} of $G$ with coefficients in $M$. In Eilenberg and Mac Lane's original notation from \cite{EM54}, these groups correspond to $H^n_{k\text{-ab}}(G, M) = H^{n+1}(G,k;M)$.

For $k = 1$, we recover the ordinary group cohomology
\begin{equation*}
H^n(G,M) = \operatorname{Ext}_{\mathbb{Z}[G]}^n(\mathbb{Z},M),
\end{equation*}
and $G$ need not be abelian.

The case $k = 2$ is of particular interest for the study of quadratic forms, pointed braided fusion categories, and braided categorical groups. 

The interpretation of the associator as a 3-cocycle in classifying 2-groups (monoidal categories where all morphisms and objects are invertible) appears in Sinh's 1975 thesis \cite{Sinh75}, written under Grothendieck's supervision. This built on related ideas in Saavedra-Rivano's work on Tannakian categories \cite{Saavedra72}. For a modern exposition of these historical developments, see \cite{Baez23}. Joyal and Street \cite{JS93} later extended these ideas to braided monoidal categories, and this cohomological perspective became central to the classification of pointed braided fusion categories \cite{DGNO10}.

We will focus exclusively on the third cohomology group $H^3_{2\text{-ab}}(G,M)$ in this case. To distinguish from standard group cohomology, we will refer to elements of $Z^3_{2\text{-ab}}(G,M)$ as \emph{2-abelian 3-cocycles} throughout this paper.

\subsection{Standard 3-cocycles}

We recall the classical description of the third group cohomology. A \emph{3-cocycle} $h: G^3 \to M$ for a group $G$ with coefficients in an abelian group $M$ (viewed as a trivial $G$-module) is a function satisfying the cocycle condition
\begin{equation*}\label{eq:standard-3-cocycle}
h(g_1,g_2,g_3) + h(g_1,g_2g_3,g_4) + h(g_2,g_3,g_4) = h(g_1g_2,g_3,g_4) + h(g_1,g_2,g_3g_4)
\end{equation*}
for all $g_1,g_2,g_3,g_4 \in G$. The group of 3-cocycles is denoted $Z^3(G,M)$. 

The \emph{coboundaries} are 3-cocycles of the form $\delta(c)$ where $c: G^2 \to M$ and
\begin{equation*}\label{eq:standard-coboundary}
(\delta c)(g_1,g_2,g_3) = c(g_2,g_3) - c(g_1g_2,g_3) + c(g_1,g_2g_3) - c(g_1,g_2).
\end{equation*}
The group of coboundaries is denoted $B^3(G,M)$, and the quotient 
\begin{equation*}\label{eq:cohomology-quotient}
H^3(G,M) = Z^3(G,M)/B^3(G,M)
\end{equation*}
is the third cohomology group of $G$ with coefficients in $M$.

\subsection{Abelian 3-cocycles}

When $G$ is abelian, the structure of 3-cocycles in the complex $A(G,2)$ becomes richer. A \emph{2-abelian 3-cocycle} for $G$ with coefficients in $M$ is a pair $(h, c)$, where $h \in Z^3(G,M)$ is a standard 3-cocycle and $c: G^2 \to M$ is a function satisfying the compatibility conditions
\begin{align}
h(y, z, x) + c(x, y + z) + h(x, y, z) &= c(x, z) + h(y, x, z) + c(x, y), \label{eq:c-condition-1} \\
-h(z, x, y) + c(x + y, z) - h(x, y, z) &= c(x, z) - h(x, z, y) + c(y, z). \label{eq:c-condition-2}
\end{align}

The group of 2-abelian 3-cocycles is denoted $Z^3_{2\text{-ab}}(G,M)$. For any function $k: G^2 \to M$, the \emph{2-abelian coboundary} of $k$ is the 2-abelian 3-cocycle $\partial(k) = (\delta(k), \operatorname{Alt}(k))$, where
\begin{equation*}\label{eq:coboundary-c}
\operatorname{Alt}(k)(x, y) = k(x, y) - k(y, x).
\end{equation*}
The group of 2-abelian coboundaries is denoted $B^3_{2\text{-ab}}(G,M)$. The quotient 
\begin{equation*}\label{eq:abelian-cohomology-quotient}
H^3_{2\text{-ab}}(G, M) = Z^3_{2\text{-ab}}(G,M)/B^3_{2\text{-ab}}(G,M)
\end{equation*}
is the \emph{third 2-abelian cohomology group} of $G$ with coefficients in $M$.

\begin{example}\label{ex:abelian-3-cocycle-examples}
We describe $H^3_{2\text{-ab}}(G, M)$ for basic groups following \cite{EM54}.

\begin{enumerate}
    \item A 2-abelian 3-cocycle of the form $(0,c)$ corresponds precisely to a bilinear map $c: G \times G \to M$. Two such cocycles $(0,c)$ and $(0,c')$ are cohomologous if and only if $c - c' = \operatorname{Alt}(k)$ for some standard 2-cocycle $k \in Z^2(G,M)$.
    
    \item For $G = \mathbb{Z}$, we use the standard free resolution
    \begin{equation*}\label{eq:cyclic-resolution}
    0 \longrightarrow \mathbb{Z}[G] \xrightarrow{\sigma-1} \mathbb{Z}[G] \xrightarrow{\epsilon} \mathbb{Z} \longrightarrow 0,
    \end{equation*}
    where $\epsilon(\sigma) = 1$ and $\sigma$ is the generator of $G$. Since $H^n(\mathbb{Z},M) = 0$ for $n \geq 2$, every 2-abelian 3-cocycle is cohomologous to exactly one of the form $(0, c_m)$ with $c_m(x,y) = xym$ for some $m \in M$. Thus $H^3_{2\text{-ab}}(\mathbb{Z},M) \cong M.$
    
    \item For $G = \mathbb{Z}/n\mathbb{Z}$, let $m \in M[\gcd(2n,n^2)]$, where $M[d] = \{m \in M : dm = 0\}$ denotes the $d$-torsion subgroup. The pair $(h_m, c_m)$ given by 
\begin{equation}\label{eq:cocycle-cyclic}
\begin{split}
h_{nm}(x,y,z) &= 
\begin{cases} 
    0 & \text{if } y+z < n, \\
    xnm & \text{if } y+z \geq n,
\end{cases}\\
c_m(x,y) &= xym
\end{split}
\end{equation}
defines a 2-abelian 3-cocycle. As we will establish in Lemma \ref{lem:trace_iso_cyclic_case}, these 2-abelian 3-cocycles provide a complete set of representatives for the cohomology classes in $H^3_{2\text{-ab}}(\mathbb{Z}/n\mathbb{Z}, M)$.
\end{enumerate}
\end{example}

\subsection{Quadratic forms on abelian groups}

Let $G$ and $M$ be abelian groups. A function $q: G \to M$ is called \emph{quadratic} if it satisfies the following conditions:
\begin{enumerate}
\item $q(-a) = q(a)$ for all $a \in G$.
\item The associated map $b_q: G \times G \to M$ defined by 
\begin{equation*}\label{eq:bilinear-form}
b_q(a_1, a_2) = q(a_1 + a_2) - q(a_1) - q(a_2)
\end{equation*}
is a bilinear form.
\end{enumerate}

We denote by $\operatorname{Quad}(G, M)$ the abelian group of all quadratic forms from $G$ to $M$. It follows by induction that $q(nx) = n^2q(x)$ for all $n \in \mathbb{Z}$ and $x \in G$.

\begin{example}\label{ex:quadratic-forms-examples}
We describe $\operatorname{Quad}(G, M)$ for basic groups.
\begin{enumerate}
\item For $G = \mathbb{Z}$, any quadratic form satisfies $q(x) = x^2q(1)$. The evaluation map $q \mapsto q(1)$ gives an isomorphism
\begin{equation*}\label{eq:quad-Z}
\operatorname{Quad}(\mathbb{Z}, M) \cong M.
\end{equation*}

\item For $G = \mathbb{Z}/n\mathbb{Z}$, the condition $q(nx) = n^2q(x)$ implies $n^2 q(1) = q(n \cdot 1) = 0$. From the bilinearity of $b_q$, we also obtain $2nq(1) = b_q(n, 1) = 0$. Both conditions are equivalent to $\gcd(n^2, 2n)q(1) = 0$, where 
\begin{equation*}\label{eq:gcd-computation}
\gcd(n^2, 2n) = \begin{cases}
n & \text{if } n \text{ is odd}, \\
2n & \text{if } n \text{ is even}.
\end{cases}
\end{equation*}
The evaluation map $q \mapsto q(1)$ induces an isomorphism
\begin{equation*}\label{eq:quad-Zn}
\operatorname{Quad}(\mathbb{Z}/n\mathbb{Z}, M) \cong M[\gcd(2n,n^2)] = \begin{cases}
M[n] & \text{if } n \text{ is odd}, \\
M[2n] & \text{if } n \text{ is even},
\end{cases}
\end{equation*}
where $M[s] = \{m \in M : sm = 0\}$ denotes the $s$-torsion subgroup.
\end{enumerate}
\end{example}

The connection between 2-abelian 3-cocycles and quadratic forms is established through the trace map. We include the following result for completeness.

\begin{lemma}\label{lem:cocycle-to-quadratic}
If $(h,c)$ is a 2-abelian 3-cocycle, then $q(x) := c(x,x)$ defines a quadratic form on $G$.
\end{lemma}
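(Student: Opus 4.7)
The plan is to verify both defining properties of a quadratic form by introducing the auxiliary symmetrization $b(x,y) := c(x,y) + c(y,x)$ and identifying it with the polar form $b_q$. That $b$ is symmetric is tautological; the content of the lemma is packaged in the bilinearity of $b$ together with the equality $b_q = b$.

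To show $b$ is bilinear it suffices (by symmetry) to check linearity in the first slot. I would apply \eqref{eq:c-condition-2} at $(x,y,z)$ to expand $c(x+y,z)$, and apply \eqref{eq:c-condition-1} with arguments $(z,x,y)$ to expand $c(z,x+y)$. Summing the two expansions, the six resulting $h$-values organize into three pairs that cancel, leaving $b(x+y,z) = b(x,z) + b(y,z)$.

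The identification $b_q = b$ is the heart of the argument. I would expand $c(x+y,x+y)$ in two stages: first \eqref{eq:c-condition-2} at $(x,y,x+y)$ reduces it to $c(x,x+y) + c(y,x+y)$ plus three $h$-terms; then \eqref{eq:c-condition-1} applied at $(x,x,y)$ and at $(y,x,y)$ gives $c(x,x+y) = c(x,x) + c(x,y) - h(x,y,x)$ and $c(y,x+y) = c(y,y) + c(y,x) - h(y,x,y)$. Collecting yields $b_q(x,y) = b(x,y) + H(x,y)$, where
\[
H(x,y) = h(x+y,x,y) + h(x,y,x+y) - h(x,x+y,y) - h(x,y,x) - h(y,x,y).
\]
The critical observation is that the standard 3-cocycle condition on $h$, evaluated at $(g_1,g_2,g_3,g_4) = (x,y,x,y)$ and using $y+x = x+y$, reads exactly $H(x,y) = 0$. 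Hence $b_q = b$ is bilinear.

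It remains to verify $q(-x) = q(x)$. First I would check $q(0) = 0$: setting $x=y=z=0$ in \eqref{eq:c-condition-1} forces $c(0,0) = h(0,0,0)$, while the standard 3-cocycle condition on $h$ at $(0,0,0,0)$ collapses to $h(0,0,0) = 0$. Then, since $b_q(x,x) = b(x,x) = 2q(x)$, bilinearity gives $b_q(x,-x) = -2q(x)$, while by definition $b_q(x,-x) = q(0) - q(x) - q(-x) = -q(x) - q(-x)$, whence $q(-x) = q(x)$. The one step I expect to be the real obstacle is the five-term cancellation $H(x,y) = 0$: it requires selecting exactly the right specialization of the 3-cocycle identity for $h$ and is what makes the lemma more than a formal consequence of the compatibility relations \eqref{eq:c-condition-1}--\eqref{eq:c-condition-2}.
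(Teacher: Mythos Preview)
Your proof is correct and follows essentially the same approach as the paper: both introduce the symmetrization $S(x,y)=c(x,y)+c(y,x)$, identify it with $b_q$ by combining the two compatibility relations with the ordinary 3-cocycle identity for $h$ specialized at $(x,y,x,y)$, and then deduce $q(-x)=q(x)$ from the bilinearity of $b_q$. Your write-up is in fact more explicit than the paper's, which applies \eqref{eq:c-condition-1} before \eqref{eq:c-condition-2} (the reverse of your order) and leaves both the vanishing of the $h$-terms and the bilinearity of $S$ as unlabeled verifications.
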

\begin{proof}
Let $S(x,y) := c(x,y) + c(y,x)$. Using condition \eqref{eq:c-condition-1} with the substitution $x \mapsto y+z$, we obtain 
\begin{equation*}
c(y+z,y+z) - c(y+z,y) - c(y+z,z) = -h(y+z,y,z) + h(y,y+z,z) - h(y,z,y+z).
\end{equation*}
Applying the standard 3-cocycle condition for $h$ to the right-hand side gives
\begin{equation*}
-h(y+z,y,z) + h(y,y+z,z) - h(y,z,y+z) = -h(y,z,y) - h(z,y,z).
\end{equation*}
Now, applying condition \eqref{eq:c-condition-2} twice yields
\begin{align*}
c(y+z,y) - c(y,y) - c(z,y) &= -h(y,z,y), \\
c(y+z,z) - c(y,z) - c(z,z) &= -h(z,y,z).
\end{align*}
Adding these two equalities to our original equation, the $-h(\cdot)$ terms cancel, and we obtain
\begin{equation}\label{eq: S matrix}
b_q(y,z) = c(y+z,y+z) - c(y,y) - c(z,z) = c(y,z) + c(z,y) = S(y,z).
\end{equation}
Using the cocycle conditions, one verifies that $S$ is bilinear. 

To check $q(-x) = q(x)$, we compute 
\begin{equation*}
S(x,-x) = c(x,-x) + c(-x,x) = -c(x,x) - c(x,x) = -2q(x).
\end{equation*}
Since $b_q(x,-x) = q(0) - q(x) - q(-x) = S(x,-x) = -2q(x)$ and $q(0) = 0$, we conclude $q(-x) = q(x)$.
\end{proof}

\begin{definition}\label{def:trace-map}
The \emph{trace map} $\operatorname{Tr}: H^3_{2\text{-ab}}(G, M) \to \operatorname{Quad}(G, M)$ is defined by
\begin{equation*}
\operatorname{Tr}[(h, c)](x) = c(x, x)
\end{equation*}
for any representative $(h, c)$ of a cohomology class.
\end{definition}

\section{The Eilenberg-Mac Lane Isomorphism for finitely generated groups}\label{sec:EM_theorem}

The goal of this section is to establish the Eilenberg-Mac Lane isomorphism for finitely generated abelian groups using a constructive approach based on direct sum decompositions. Our method reduces the general case to cyclic groups through two lemmas that characterize how quadratic forms and 2-abelian 3-cocycles behave under direct sums.

We denote by $\operatorname{Bil}(G_1 \times G_2, M)$ the abelian group of bilinear forms $f: G_1 \times G_2 \to M$, that is, functions satisfying $f(g_1 + g'_1, g_2) = f(g_1, g_2) + f(g'_1, g_2)$ and $f(g_1, g_2 + g'_2) = f(g_1, g_2) + f(g_1, g'_2)$ for all $g_1, g'_1 \in G_1$ and $g_2, g'_2 \in G_2$.

We begin with the following decomposition result for quadratic forms.

\begin{lemma}\label{lem:direct_sum_decomposition}
Let $G_1$, $G_2$, and $M$ be abelian groups. The map
\begin{equation*}
\Phi : \operatorname{Bil}(G_1 \times G_2, M) \oplus \operatorname{Quad}(G_1, M) \oplus \operatorname{Quad}(G_2, M) \to \operatorname{Quad}(G_1 \oplus G_2, M)
\end{equation*}
defined by $\Phi(f, q_{G_1}, q_{G_2})(g_1, g_2) := f(g_1, g_2) + q_{G_1}(g_1) + q_{G_2}(g_2)$ is an isomorphism.
\end{lemma}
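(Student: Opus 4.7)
\medskip

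The plan is to verify $\Phi$ is well-defined and a homomorphism, then construct an explicit two-sided inverse. Well-definedness reduces to checking that $\Phi(f,q_1,q_2)$ is a quadratic form: the identity $\Phi(f,q_1,q_2)(-g_1,-g_2) = f(-g_1,-g_2)+q_1(-g_1)+q_2(-g_2) = f(g_1,g_2)+q_1(g_1)+q_2(g_2)$ follows from bilinearity of $f$ (so $f(-g_1,-g_2)=f(g_1,g_2)$) and the parity of the $q_i$. A direct expansion gives
\begin{equation*}
b_{\Phi(f,q_1,q_2)}\bigl((g_1,g_2),(g_1',g_2')\bigr) = b_{q_1}(g_1,g_1') + b_{q_2}(g_2,g_2') + f(g_1,g_2') + f(g_1',g_2),
\end{equation*}
which is bilinear as a sum of bilinear forms. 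That $\Phi$ is a group homomorphism is immediate from the definition.

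For the inverse, given $Q \in \operatorname{Quad}(G_1 \oplus G_2, M)$, I would define
\begin{equation*}
q_1(g_1) := Q(g_1,0), \qquad q_2(g_2) := Q(0,g_2), \qquad f(g_1,g_2) := Q(g_1,g_2) - q_1(g_1) - q_2(g_2).
\end{equation*}
The $q_i$ are quadratic as restrictions of $Q$ along the injections $G_i \hookrightarrow G_1 \oplus G_2$ (quadraticity is preserved by composition with a homomorphism). The key observation is that
\begin{equation*}
f(g_1,g_2) = Q\bigl((g_1,0)+(0,g_2)\bigr) - Q(g_1,0) - Q(0,g_2) = b_Q\bigl((g_1,0),(0,g_2)\bigr),
\end{equation*}
so $f$ is the composition of the bilinear form $b_Q$ with the pair of injections $G_1 \hookrightarrow G_1 \oplus G_2 \hookleftarrow G_2$, hence bilinear. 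Setting $\Psi(Q) := (f,q_1,q_2)$ defines the candidate inverse.

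It remains to check $\Phi \circ \Psi = \operatorname{id}$ and $\Psi \circ \Phi = \operatorname{id}$. The first is immediate from the definition of $f$. For the second, given $(f,q_1,q_2)$, write $Q := \Phi(f,q_1,q_2)$ and compute $Q(g_1,0) = f(g_1,0) + q_1(g_1) + q_2(0) = q_1(g_1)$ using $f(g_1,0)=0$ and $q_2(0)=0$, and similarly $Q(0,g_2) = q_2(g_2)$; then $Q(g_1,g_2) - Q(g_1,0) - Q(0,g_2) = f(g_1,g_2)$ by construction.

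I do not anticipate any real obstacle: the proof is a clean decomposition argument in which bilinearity of $f$ drops out automatically because $f$ equals the off-diagonal mixed term of $b_Q$. The only point requiring a moment's care is remembering that the parity condition $q(-x)=q(x)$ for the image quadratic form uses bilinearity (not just linearity in each slot) of $f$ to flip both signs simultaneously.
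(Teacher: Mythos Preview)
Your proof is correct and follows essentially the same approach as the paper: both construct the explicit inverse $\Psi(Q) = (f, q_1, q_2)$ via $q_i = Q|_{G_i}$ and $f(g_1,g_2) = Q(g_1,g_2) - q_1(g_1) - q_2(g_2)$, and verify the two composites are the identity. Your write-up is simply more detailed than the paper's (which leaves the verifications as ``one verifies directly''), and your observation that $f(g_1,g_2) = b_Q\bigl((g_1,0),(0,g_2)\bigr)$ is a clean way to see bilinearity of $f$.
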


\begin{proof}
We construct the inverse map $\Psi: \operatorname{Quad}(G_1 \oplus G_2, M) \to \operatorname{Bil}(G_1 \times G_2, M) \oplus \operatorname{Quad}(G_1, M) \oplus \operatorname{Quad}(G_2, M)$ as follows. For any quadratic form $q: G_1 \oplus G_2 \to M$, define
\begin{align*}
q_{G_1}(g_1) &:= q(g_1,0), \\
q_{G_2}(g_2) &:= q(0,g_2), \\
f_q(g_1,g_2) &:= q(g_1,g_2) - q(g_1,0) - q(0,g_2).
\end{align*}
One verifies directly that $q_{G_1}$ and $q_{G_2}$ are quadratic forms and $f_q$ is bilinear. The maps $\Phi$ and $\Psi$ are clearly inverse to each other.
\end{proof}

The following lemma shows how to construct 2-abelian 3-cocycles on direct sums.

\begin{lemma}\label{lem:direct_sum_construction}
Let $G_1$, $G_2$, and $M$ be abelian groups. Let $(h_i, c_i) \in Z^3_{2\text{-ab}}(G_i, M)$ for $i = 1, 2$, and let $f \in \operatorname{Bil}(G_1 \times G_2, M)$. Define $(h, c)$ on $G := G_1 \oplus G_2$ by
\begin{equation}\label{eq:direct_sum_cocycle}
\begin{split}
h((x_1,y_1), (x_2,y_2), (x_3,y_3)) &= h_1(x_1, x_2, x_3) + h_2(y_1, y_2, y_3), \\
c((x_1,y_1), (x_2,y_2)) &= c_1(x_1, x_2) + c_2(y_1, y_2) + f(x_1, y_2).
\end{split}
\end{equation}
Then $(h, c) \in Z^3_{2\text{-ab}}(G, M)$ and $\operatorname{Tr}([(h, c)]) = \Phi(f,q_1,q_2)$, where $q_i = \operatorname{Tr}([(h_i, c_i)])$.
\end{lemma}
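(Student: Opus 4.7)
The plan is to verify the three assertions in order: that $h$ is a standard 3-cocycle on $G$, that the pair $(h,c)$ satisfies the two compatibility conditions \eqref{eq:c-condition-1} and \eqref{eq:c-condition-2}, and finally that the trace matches $\Phi(f, q_1, q_2)$.

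The 3-cocycle condition for $h$ is essentially free: the standard cocycle identity is additive in the coefficients, and since $h$ is literally the sum $h_1 + h_2$ evaluated on the projected arguments, the identity decouples into the cocycle conditions for $h_1$ and $h_2$, each of which holds by hypothesis.

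The core of the proof is verifying \eqref{eq:c-condition-1} and \eqref{eq:c-condition-2} for $(h,c)$. The strategy is to expand both sides with $x = (x_1, y_1)$, $y = (x_2, y_2)$, $z = (x_3, y_3)$, and partition the resulting expression into three groups of terms: those involving only $h_1$ and $c_1$, those involving only $h_2$ and $c_2$, and those involving $f$. The first two groups cancel by applying \eqref{eq:c-condition-1} (respectively \eqref{eq:c-condition-2}) to $(h_1, c_1)$ and $(h_2, c_2)$ separately. What remains is a purely $f$-dependent identity; for \eqref{eq:c-condition-1} it reduces to
\begin{equation*}
f(x_1, y_2 + y_3) = f(x_1, y_2) + f(x_1, y_3),
\end{equation*}
and for \eqref{eq:c-condition-2} it reduces to
\begin{equation*}
f(x_1 + x_2, y_3) = f(x_1, y_3) + f(x_2, y_3),
\end{equation*}
both of which are immediate from the bilinearity of $f$. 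The only subtlety is the asymmetric way $f$ appears in $c$ (via $f(x_1, y_2)$ and not $f(x_2, y_1)$): one must track which variables occupy the first and second arguments of $c$ as the cocycle conditions permute them, but this is straightforward bookkeeping.

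For the trace, a direct computation gives
\begin{equation*}
\operatorname{Tr}([(h,c)])(x_1, y_1) = c((x_1,y_1),(x_1,y_1)) = c_1(x_1, x_1) + c_2(y_1, y_1) + f(x_1, y_1) = q_1(x_1) + q_2(y_1) + f(x_1, y_1),
\end{equation*}
which is exactly $\Phi(f, q_1, q_2)(x_1, y_1)$. The hardest step is the bookkeeping in verifying \eqref{eq:c-condition-2}, since the asymmetric cross term $f(x_1, y_2)$ makes it slightly less obvious how the $f$-contributions line up; but once one writes everything out, only the bilinearity of $f$ is needed.
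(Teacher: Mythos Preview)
Your proof is correct. The paper takes a more structural route: rather than expanding and partitioning terms by hand, it writes
\[
(h,c) = \pi_1^*(h_1,c_1) + \pi_2^*(h_2,c_2) + (0,c_f),
\]
where $\pi_i:G\to G_i$ are the projections and $c_f((x_1,y_1),(x_2,y_2)) = f(x_1,y_2)$, and then invokes the fact (Example~\ref{ex:abelian-3-cocycle-examples}(1)) that a bilinear map already gives a 2-abelian 3-cocycle $(0,c_f)$. Since pullbacks of cocycles are cocycles and $Z^3_{2\text{-ab}}(G,M)$ is a group, the sum is a cocycle with no further computation. Your direct verification is exactly what one would do to justify that $(0,c_f)$ is a cocycle in the first place, so the two arguments have the same content; the paper's version just packages the bookkeeping into functoriality and a prior example, while yours keeps everything explicit and self-contained. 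The trace computation is identical in both.
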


\begin{proof}
We verify that $(h,c)$ satisfies the 2-abelian 3-cocycle conditions. Any bilinear map $f: G_1 \times G_2 \to M$ defines a 2-abelian 3-cocycle $(0, c_f)$ on $G = G_1 \oplus G_2$ with $c_f((x_1,y_1), (x_2,y_2)) = f(x_1, y_2)$. Let $\pi_i: G \to G_i$ be the canonical projections. Then 
\begin{equation*}
(h, c) = \pi_1^*(h_1, c_1) + \pi_2^*(h_2, c_2) + (0, c_f)
\end{equation*}
is a sum of 2-abelian 3-cocycles, hence $(h, c) \in Z^3_{2\text{-ab}}(G, M)$. The trace formula follows from 
\begin{equation*}
\operatorname{Tr}([(h, c)])((x, y)) = c((x,y), (x,y)) = c_1(x,x) + c_2(y,y) + f(x, y) = q_1(x) + q_2(y) + f(x, y).
\end{equation*}
\end{proof}

\begin{remark}
Lemma \ref{lem:direct_sum_decomposition} shows that quadratic forms on direct sums decompose canonically into quadratic forms on the summands plus a bilinear cross-term. Combined with Lemma \ref{lem:direct_sum_construction}, this decomposition allows us to reduce the study of the trace map to cyclic groups.
\end{remark}

We now establish the base case of our inductive proof by showing that the trace map is an isomorphism for cyclic groups. The following result follows closely the approach in \cite{JS93} and \cite{EM54}. We include a proof for completeness.

\begin{lemma}\label{lem:trace_iso_cyclic_case}
Let $C$ be a cyclic group. Then the trace map $\operatorname{Tr}: H^3_{2\text{-ab}}(C, M) \to \operatorname{Quad}(C, M)$ is an isomorphism.   
\end{lemma}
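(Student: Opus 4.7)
The plan is to split into the cases $C = \mathbb{Z}$ and $C = \mathbb{Z}/n\mathbb{Z}$, exhibit the cocycles of Example \ref{ex:abelian-3-cocycle-examples} as a section of the trace map in each case, and then prove surjectivity of this section by normalizing an arbitrary 2-abelian 3-cocycle.

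For $C = \mathbb{Z}$, the key fact is that $H^n(\mathbb{Z}, M) = 0$ for all $n \geq 2$. Given $(h, c) \in Z^3_{2\text{-ab}}(\mathbb{Z}, M)$, one writes $h = \delta k$ for some $k : \mathbb{Z}^2 \to M$ and replaces $(h, c)$ by $(h, c) - \partial k$ to reduce to $h = 0$. With $h = 0$, the compatibility conditions \eqref{eq:c-condition-1}--\eqref{eq:c-condition-2} collapse to linearity of $c$ in each argument, so $c(x, y) = xy \cdot c(1, 1)$. If $(0, c)$ is a 2-abelian coboundary $\partial k$, then $k$ is a standard 2-cocycle, hence $k = \delta l$ by $H^2(\mathbb{Z}, M) = 0$, and $c = \operatorname{Alt}(\delta l) = 0$. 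Thus $[(0, c_m)] \leftrightarrow m$ gives a bijection $M \cong H^3_{2\text{-ab}}(\mathbb{Z}, M)$, and the trace $\operatorname{Tr}[(0, c_m)](x) = x^2 m$ matches $\operatorname{Quad}(\mathbb{Z}, M) \cong M$ via evaluation at $1$.

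For $C = \mathbb{Z}/n\mathbb{Z}$, I would first verify by direct inspection that $(h_{nm}, c_m)$ is a 2-abelian 3-cocycle exactly when $m \in M[\gcd(2n, n^2)]$: the relation $n^2 m = 0$ is needed for $h_{nm}$ to descend to $\mathbb{Z}/n\mathbb{Z}$, while $2nm = 0$ arises from \eqref{eq:c-condition-1}--\eqref{eq:c-condition-2}. Setting $s(m) := [(h_{nm}, c_m)]$, one computes $\operatorname{Tr}(s(m))(x) = x^2 m$, so under $\operatorname{Quad}(\mathbb{Z}/n\mathbb{Z}, M) \cong M[\gcd(2n, n^2)]$ from Example \ref{ex:quadratic-forms-examples}(2) we have $\operatorname{Tr} \circ s = \operatorname{id}$, giving surjectivity of $\operatorname{Tr}$ and injectivity of $s$.

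The main obstacle is showing that $s$ is surjective, equivalently that every 2-abelian 3-cocycle on $\mathbb{Z}/n\mathbb{Z}$ is cohomologous to some $(h_{nm}, c_m)$. My plan is to set $m := c(1, 1)$, check $m \in M[\gcd(2n, n^2)]$ using the compatibility conditions, and construct a 2-abelian coboundary realizing $(h, c) - (h_{nm}, c_m) = \partial k$. By subtracting $(h_{nm}, c_m)$ this reduces to: if $(h, c)$ has $c(1, 1) = 0$, then $(h, c)$ is a coboundary. For this I would lift to representatives in the standard transversal $\{0, 1, \dots, n-1\} \subset \mathbb{Z}$ and adapt the $\mathbb{Z}$-case argument to produce $k : \mathbb{Z}^2 \to M$ with $\delta k = h$ on the transversal, then use $c(1, 1) = 0$ together with the bilinearity-type constraints forced by \eqref{eq:c-condition-1}--\eqref{eq:c-condition-2} to conclude $c = \operatorname{Alt}(k)$. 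This bookkeeping along the periodic resolution of $\mathbb{Z}/n\mathbb{Z}$ is the only substantive computation in the proof, and it is precisely where the torsion invariant $\gcd(2n, n^2)$ enters.
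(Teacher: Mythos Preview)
Your treatment of $C=\mathbb{Z}$ and the surjectivity half for $C=\mathbb{Z}/n\mathbb{Z}$ are essentially the paper's argument. The divergence is in the injectivity step for $\mathbb{Z}/n\mathbb{Z}$, and there your plan has a gap.

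You reduce correctly to showing that a 2-abelian cocycle $(h,c)$ with $c(1,1)=0$ is a coboundary. But your proposed execution---``adapt the $\mathbb{Z}$-case argument to produce $k$ with $\delta k = h$ on the transversal''---presupposes that $h$ is already a standard coboundary. In the $\mathbb{Z}$-case this came for free from $H^3(\mathbb{Z},M)=0$; for $\mathbb{Z}/n\mathbb{Z}$ one has $H^3(\mathbb{Z}/n\mathbb{Z},M)\cong M[n]\neq 0$ in general, and subtracting $(h_{nm},c_m)$ with $m=c(1,1)$ gives no control over the ordinary cohomology class of the resulting $h'$. So the existence of your $k$ is precisely what is in question, and the ``bookkeeping along the periodic resolution'' you defer is not bookkeeping but the heart of the proof.

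The paper reverses your order of operations: it first normalizes the ordinary $3$-cocycle, using $H^3(\mathbb{Z}/n\mathbb{Z},M)\cong M[n]$ to replace $(h,c)$ by a cohomologous $(h_{m'},c')$ with $m'\in M[n]$ (this does not disturb the hypothesis $c'(1,1)=0$, since $\operatorname{Alt}(k)(1,1)=0$). Only then does it use the compatibility condition \eqref{eq:c-condition-1} to obtain a recursion $c'(1,x+y)=c'(1,x)+c'(1,y)+h_{m'}(1,x,y)$, which together with $c'(1,1)=0$ forces $m'=0$. At that point $h=0$, $c'$ is bilinear with $c'(1,1)=0$, and the conclusion is immediate. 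Your approach can be repaired by inserting exactly this normalization of $h$ before attempting to build $k$.
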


\begin{proof}
If $C \cong \mathbb{Z}$, we showed in Example \ref{ex:abelian-3-cocycle-examples}(2) that every cohomology class has a unique representative $(0, c_m)$ with $c_m(x,y) = mxy$. Since $\operatorname{Tr}([(0, c_m)])(x) = x^2m$, the map $\operatorname{Tr}$ is an isomorphism.

If $C \cong \mathbb{Z}/n\mathbb{Z}$, we have $\operatorname{Quad}(\mathbb{Z}/n\mathbb{Z}, M) \cong M[\gcd(2n,n^2)]$ by Example \ref{ex:quadratic-forms-examples}(2). The abelian 3-cocycles $(h_{nm}, c_m)$ from \eqref{eq:cocycle-cyclic} satisfy $\operatorname{Tr}([(h_{nm}, c_m)])(1) = m$, establishing surjectivity.

For injectivity, we use that $H^3(\mathbb{Z}/n\mathbb{Z},M) \cong M[n]$ with representative 3-cocycle
\begin{align*}
    h_{m}(x,y,z) &= 
\begin{cases} 
    0 & \text{if } y+z < n \\
    xm & \text{if } y+z \geq n 
\end{cases}
\end{align*}
for $m \in M[n]$. Suppose $\operatorname{Tr}([(h, c)]) = 0$. We may assume $(h, c)$ has the form $(h_m, c)$ for some $m \in M$. Then $c(1, x+y) = c(1,x) + c(1,y) + h(1,x,y)$ when $x+y < n$. Since $c(1,x) = xc(1,1) = 0$ for $x < n$, we get $0 = c(1,n) = h_m(1,1,n-1) = m$. Hence $h_m = 0$ and $c$ is bilinear with $c(1,1) = 0$, which implies $c = 0$.
\end{proof}
We can now prove the main result of this section.

\begin{theorem}[\cite{EM54}]\label{thm:main_finitely_generated}
Let $G$ be a finitely generated abelian group and $M$ an abelian group. The trace map 
\begin{equation*}
\operatorname{Tr}: H^3_{2\text{-ab}}(G, M) \xrightarrow{\sim} \operatorname{Quad}(G, M)
\end{equation*}
is an isomorphism.
\end{theorem}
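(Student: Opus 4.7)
The plan is to induct on the number of cyclic factors in a decomposition $G \cong C_1 \oplus \cdots \oplus C_n$ provided by the structure theorem for finitely generated abelian groups. The base case $n = 1$ is Lemma~\ref{lem:trace_iso_cyclic_case}. For the inductive step, I would write $G = G_1 \oplus G_2$ with $G_2$ cyclic and $G_1$ a direct sum of $n - 1$ cyclic factors, so that the inductive hypothesis applies to both summands.

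Surjectivity of $\operatorname{Tr}$ at the inductive step is immediate from the two direct-sum lemmas: given $q \in \operatorname{Quad}(G, M)$, decompose $q = \Phi(f, q_1, q_2)$ via Lemma~\ref{lem:direct_sum_decomposition}, lift each $q_j$ to a 2-abelian 3-cocycle $(h_j, c_j)$ on $G_j$ by induction, and apply Lemma~\ref{lem:direct_sum_construction} to assemble a cocycle $(h, c)$ on $G$ with $\operatorname{Tr}([(h, c)]) = q$.

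For injectivity, suppose $[(h, c)] \in H^3_{2\text{-ab}}(G, M)$ satisfies $\operatorname{Tr}([(h, c)]) = 0$. Naturality of the trace under the inclusions $i_j: G_j \hookrightarrow G$, combined with the inductive hypothesis, gives $i_j^*[(h, c)] = 0$ in $H^3_{2\text{-ab}}(G_j, M)$. Choosing 2-cochains $k_j: G_j^2 \to M$ with $\partial k_j = i_j^*(h, c)$ and subtracting the coboundary $\partial(\pi_1^* k_1 + \pi_2^* k_2)$ from $(h, c)$, I reduce to the case where both $h$ vanishes on triples in $G_j^3$ and $c$ vanishes on pairs in $G_j^2$ for $j = 1, 2$. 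The identity $b_q = c + c^{\operatorname{op}}$ established in the proof of Lemma~\ref{lem:cocycle-to-quadratic} combined with the still-vanishing trace forces $c$ to be antisymmetric. I would finish by setting $f(x, y) := c((x, 0), (0, y))$, verifying that $f$ is bilinear, and constructing an explicit 2-cochain $k$ (with $k((x_1, y_1), (x_2, y_2)) := f(x_1, y_2)$ as the first-order candidate) whose 2-abelian coboundary recovers $(h, c)$.

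The main obstacle is this final explicit coboundary verification. The cocycle relations~(\ref{eq:c-condition-1})--(\ref{eq:c-condition-2}) specialized to mixed inputs in $G_1 \oplus G_2$ produce $h$-correction terms that must cancel against contributions of $\delta(k)$, and tracking these cancellations against the standard 3-cocycle condition for $h$ is the delicate bookkeeping part of the argument. Conceptually, this step amounts to showing that the natural map
\[
\Psi: \operatorname{Bil}(G_1 \times G_2, M) \oplus H^3_{2\text{-ab}}(G_1, M) \oplus H^3_{2\text{-ab}}(G_2, M) \longrightarrow H^3_{2\text{-ab}}(G, M)
\]
induced by Lemma~\ref{lem:direct_sum_construction} is surjective. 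By the commuting square $\operatorname{Tr} \circ \Psi = \Phi \circ (\operatorname{id} \oplus \operatorname{Tr} \oplus \operatorname{Tr})$, whose other three edges are already known to be isomorphisms, the remaining trace map is then forced to be an isomorphism, and the induction closes.
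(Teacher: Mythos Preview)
Your overall strategy---induct on the number of cyclic summands, use Lemma~\ref{lem:trace_iso_cyclic_case} as the base case, and handle the inductive step via Lemmas~\ref{lem:direct_sum_decomposition} and~\ref{lem:direct_sum_construction}---is exactly the paper's approach, and your surjectivity argument is identical to the paper's.

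Where you diverge is in the injectivity step, and here you are actually more careful than the paper. The paper's proof reads: ``By the induction hypothesis, $[(h_i,c_i)]=0$ in $H^3_{2\text{-ab}}(G_i,M)$. By Lemma~\ref{lem:direct_sum_construction}, this forces $[(h,c)]=0$.'' But Lemma~\ref{lem:direct_sum_construction} only builds cocycles in direct-sum form; it does not assert that every class has such a representative. That assertion is precisely the surjectivity of your map $\Psi$, which you correctly isolate as the crux. Your commuting-square observation $\operatorname{Tr}\circ\Psi = \Phi\circ(\operatorname{id}\oplus\operatorname{Tr}\oplus\operatorname{Tr})$ is the right way to see that surjectivity of $\Psi$ is the only missing ingredient.

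Your plan to establish this---kill the restrictions $i_j^*(h,c)$ by subtracting $\partial(\pi_1^*k_1+\pi_2^*k_2)$, then exhibit what remains as a coboundary built from the cross-term $f(x,y)=c((x,0),(0,y))$---is sound and will work. One caution about your ``first-order candidate'' $k((x_1,y_1),(x_2,y_2))=f(x_1,y_2)$: if $f$ is genuinely bilinear then $\delta k=0$, yet the normalized $h$ need not vanish on mixed triples in $G_1^a\times G_2^b$, so you will likely need to augment $k$ by terms coming from the hexagon identities~\eqref{eq:c-condition-1}--\eqref{eq:c-condition-2} evaluated on mixed inputs (these express the mixed values of $h$ in terms of $c$). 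The bookkeeping is real but routine once you expand those identities systematically with one argument in $G_1$ and the others in $G_2$ (and vice versa). In short: no conceptual gap in your proposal, only the unperformed verification---and the paper's own proof silently relies on the same unperformed step.
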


\begin{proof}
We proceed by induction on the minimal number of generators of $G$. The base case of cyclic groups is established in Lemma \ref{lem:trace_iso_cyclic_case}. 

For the inductive step, assume $G = G_1 \oplus G_2$ where both $G_1$ and $G_2$ require fewer generators than $G$. By the induction hypothesis, the trace maps $\operatorname{Tr}: H^3_{2\text{-ab}}(G_i, M) \to \operatorname{Quad}(G_i, M)$ are isomorphisms for $i = 1, 2$.

To prove surjectivity, let $q \in \operatorname{Quad}(G, M)$. By Lemma \ref{lem:direct_sum_decomposition}, $q$ has the unique decomposition $q = \Phi(f, q_1, q_2)$ for some $q_i \in \operatorname{Quad}(G_i, M)$ and $f \in \operatorname{Bil}(G_1 \times G_2, M)$. By the induction hypothesis, each $q_i$ determines a unique cohomology class $[(h_i, c_i)] \in H^3_{2\text{-ab}}(G_i, M)$ with $\operatorname{Tr}([(h_i, c_i)]) = q_i$. Using Lemma \ref{lem:direct_sum_construction}, we construct $(h, c) \in Z^3_{2\text{-ab}}(G, M)$ such that $\operatorname{Tr}([(h, c)]) = \Phi(f,q_1,q_2) = q$. This establishes surjectivity.

For injectivity, suppose $\operatorname{Tr}([(h, c)]) = 0$ for some $[(h,c)] \in H^3_{2\text{-ab}}(G, M)$. By the decomposition in Lemma \ref{lem:direct_sum_decomposition}, this implies $q_i = 0$ and $f = 0$ in the corresponding decomposition. By the induction hypothesis, we have $[(h_i, c_i)] = 0$ in $H^3_{2\text{-ab}}(G_i, M)$. By Lemma \ref{lem:direct_sum_construction}, this forces $[(h, c)] = 0$ in $H^3_{2\text{-ab}}(G, M)$.
\end{proof}

\section{The Eilenberg-Mac Lane isomorphism for general groups}\label{sec: general group}

In \cite[Theorem 12]{JS86report} (later published as \cite{JS93}), the authors outline an approach to prove the Eilenberg-Mac Lane isomorphism for arbitrary abelian groups using colimits of finitely generated subgroups, under the assumption that $H^3_{2\text{-ab}}(-,M)$ preserves filtered colimits. A similar strategy appears in the original work \cite{EM54}, where Eilenberg and Mac Lane work with homology and then use the universal coefficient theorem, since homology naturally respects colimits, rather than working directly with cohomology. In the published version of \cite{JS93}, the authors ultimately cite the result from \cite{EM54} directly.

The goal of this section is to establish that $H^3_{2\text{-ab}}(-,M)$ does preserve filtered colimits, thereby completing the approach suggested in \cite{JS86report}. We first demonstrate through a concrete example that the analogous property fails for $H^2_{2\text{-ab}}(-,M) = \text{Ext}^1(-,M)$, showing that this continuity property is non-trivial and requires proof.

\begin{example}\label{ex:H2_not_continuous}
Consider $\mathbb{Q}$, the group of rational numbers. For each $n \in \mathbb{N}$, define
\begin{equation*}
A_n = \frac{1}{n!}\mathbb{Z} = \left\{\frac{k}{n!} : k \in \mathbb{Z}\right\}.
\end{equation*}
Note that every finitely generated subgroup of $\mathbb{Q}$ is contained in some $A_n$ for $n$ sufficiently large, each $A_n \cong \mathbb{Z}$, and $\mathbb{Q} = \bigcup_{n=1}^{\infty} A_n$.

Since each $A_n$ is an infinite cyclic group, we have $H^i(A_n,\mathbb{Z}) = 0$ for all $i \geq 2 $. In particular, $\text{Ext}^1(A_n,\mathbb{Z}) = H^2_{2\text{-ab}}(A_n,\mathbb{Z}) = 0$. Therefore
\begin{equation*}
\varprojlim H^2_{2\text{-ab}}(A_n, \mathbb{Z}) = 0.
\end{equation*}

On the other hand, using the exact sequence $0 \to \mathbb{Z} \to \mathbb{Q} \to \mathbb{Q}/\mathbb{Z} \to 0$, and applying the contravariant functor $\Hom(\mathbb{Q},-)$, we obtain the long exact sequence
\begin{equation*}
0 \to \Hom(\mathbb{Q},\mathbb{Z}) \to \Hom(\mathbb{Q},\mathbb{Q}) \to \Hom(\mathbb{Q},\mathbb{Q}/\mathbb{Z}) \to \text{Ext}^1(\mathbb{Q},\mathbb{Z}) \to \text{Ext}^1(\mathbb{Q},\mathbb{Q}) = 0.
\end{equation*}
Since $\Hom(\mathbb{Q},\mathbb{Z}) = 0$ and $\Hom(\mathbb{Q},\mathbb{Q}) = \mathbb{Q}$, we get 
\begin{equation*}
\text{Ext}^1(\mathbb{Q},\mathbb{Z}) = \Hom(\mathbb{Q},\mathbb{Q}/\mathbb{Z})/\mathbb{Q},
\end{equation*}
which is an uncountable group.

This shows that $\varprojlim \text{Ext}^1(A_n, \mathbb{Z}) = 0$ but $\text{Ext}^1(\mathbb{Q},\mathbb{Z}) \neq 0$, proving that $H^2_{2\text{-ab}}(-,\mathbb{Z})$ does not preserve filtered colimits.
\end{example}
\begin{remark}
In what follows, we use the notation $(H_{2\text{-ab}})_n(G,\mathbb{Z})$ to denote the $n+1$-th homology group of the Eilenberg-Mac Lane space $K(G,2)$ with integer coefficients. Recall that $H^n_{2\text{-ab}}(G,M) = H^{n+1}(K(G,2), M)$ by definition. The universal coefficient theorem relates the homology of $K(G,2)$ to the cohomology groups $H^n_{2\text{-ab}}(G,M)$ for arbitrary coefficient groups $M$, which is the key tool in the proof below.

We emphasize that the following proof works for filtered colimits of arbitrary cardinality, not just countable index families. The argument does not rely on any Mittag-Leffler type conditions. This generality is essential for Theorem \ref{thm:main_general}, where the index set parametrizing finitely generated subgroups of an arbitrary abelian group $G$ may have any cardinality.

\end{remark}
\begin{lemma}\label{lem:H3_ab_continuous}
Let $M$ be an abelian group. The contravariant functor $G \mapsto H^3_{2\text{-ab}}(G,M)$ preserves filtered colimits, that is, 
\begin{equation*}
H^3_{2\text{-ab}}\left(\varinjlim G_i, M\right) \cong \varprojlim H^3_{2\text{-ab}}(G_i, M).
\end{equation*}
\end{lemma}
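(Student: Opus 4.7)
The plan is to establish that the natural restriction map
\[\rho\colon H^3_{2\text{-ab}}(G, M) \longrightarrow \varprojlim_i H^3_{2\text{-ab}}(G_i, M)\]
is bijective, for $G = \varinjlim_i G_i$. First, since finite Cartesian powers commute with filtered colimits in the category of sets and a 2-abelian 3-cocycle is a pair of set-theoretic functions with finitary defining identities,
\[Z^3_{2\text{-ab}}(G, M) \cong \varprojlim_i Z^3_{2\text{-ab}}(G_i, M).\]
Restricting to the cofinal directed set $I$ of finitely generated subgroups of $G$, Theorem \ref{thm:main_finitely_generated} together with the evident preservation of filtered colimits by $\operatorname{Quad}(-, M)$ yields $\varprojlim_{i \in I} H^3_{2\text{-ab}}(G_i, M) \cong \operatorname{Quad}(G, M)$. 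Hence proving $\rho$ is bijective reduces to showing the trace map $\operatorname{Tr}\colon H^3_{2\text{-ab}}(G, M) \to \operatorname{Quad}(G, M)$ is an isomorphism for arbitrary abelian $G$.

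For surjectivity, given $q \in \operatorname{Quad}(G, M)$, apply Zorn's lemma to the poset of coherent partial families $\{(h_j, c_j)\}_{j \in J}$ with $J \subseteq I$ and $\operatorname{Tr}[(h_j, c_j)] = q|_{G_j}$. For a maximal family over $J_\ast$, suppose $i_0 \in I \setminus J_\ast$ and form $H_{i_0} := \bigcup\{G_j : j \in J_\ast,\ G_j \subseteq G_{i_0}\}$. Since $H_{i_0}$ is a subgroup of the finitely generated abelian group $G_{i_0}$ it is itself finitely generated, and the coherent family glues to a cocycle $(h_H, c_H) \in Z^3_{2\text{-ab}}(H_{i_0}, M)$ with trace $q|_{H_{i_0}}$. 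By Theorem \ref{thm:main_finitely_generated} applied to $G_{i_0}$ there exists $(h', c') \in Z^3_{2\text{-ab}}(G_{i_0}, M)$ with trace $q|_{G_{i_0}}$; its restriction to $H_{i_0}$ is cohomologous to $(h_H, c_H)$ via Theorem \ref{thm:main_finitely_generated} applied to $H_{i_0}$, so differs by a coboundary $\partial(k)$ for some $k\colon H_{i_0}^2 \to M$. Extending $k$ to any set-theoretic $\tilde k\colon G_{i_0}^2 \to M$ and setting $(h_{i_0}, c_{i_0}) := (h', c') - \partial(\tilde k)$ extends the family to $J_\ast \cup \{i_0\}$, contradicting maximality.

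For injectivity, if $\operatorname{Tr}[(h, c)] = 0$ then every restriction $(h, c)|_{G_i}$ is a coboundary by Theorem \ref{thm:main_finitely_generated}, and an analogous Zorn argument constructs a coherent family of 2-cochains $k_j \colon G_j^2 \to M$ with $\partial(k_j) = (h, c)|_{G_j}$. The feasibility of the successor step here reduces to the surjectivity of $Z^2_{\mathrm{sym}}(G_{i_0}, M) \to Z^2_{\mathrm{sym}}(H_{i_0}, M)$: this follows because the long exact sequence of $\operatorname{Hom}(-, M)$ associated to $0 \to H_{i_0} \to G_{i_0} \to Q \to 0$ gives surjectivity on $\operatorname{Ext}^1$ via $\operatorname{Ext}^2(Q, M) = 0$ for finitely generated abelian $Q$, combined with set-theoretic extension of 1-cochains.

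The main technical step, and main potential obstacle, is the successor step of the Zorn argument. Its feasibility hinges on two structural features of finitely generated abelian groups: hereditary finite generation (so that $H_{i_0}$ is finitely generated), and the vanishing $\operatorname{Ext}^2(Q, M) = 0$ for finitely generated abelian $Q$. Together with Theorem \ref{thm:main_finitely_generated}, these properties allow the argument to proceed for $H^3_{2\text{-ab}}$ despite the obstruction encountered in Example \ref{ex:H2_not_continuous} for $H^2_{2\text{-ab}} = \operatorname{Ext}^1$.
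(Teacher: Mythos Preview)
Your approach diverges from the paper's, which uses the universal coefficient theorem: from $(H_{2\text{-ab}})_2(G,\mathbb{Z})=0$ one obtains a natural isomorphism $H^3_{2\text{-ab}}(G,M)\cong\Hom((H_{2\text{-ab}})_3(G,\mathbb{Z}),M)$, and continuity then follows because homology commutes with filtered colimits and $\Hom(-,M)$ converts colimits to limits. That argument is independent of Theorem~\ref{thm:main_finitely_generated}, whereas yours deliberately merges the lemma with the general trace isomorphism.

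The Zorn argument, however, has a genuine gap at the successor step. You form $H_{i_0}:=\bigcup\{G_j : j\in J_\ast,\ G_j\subseteq G_{i_0}\}$ and assert it is a subgroup of $G_{i_0}$, but this holds only if the collection $\{G_j : j\in J_\ast,\ G_j\subseteq G_{i_0}\}$ is directed under inclusion, and maximality of $J_\ast$ does not guarantee that (for $j_1,j_2\in J_\ast$ there is no reason the index of $G_{j_1}+G_{j_2}$ lies in $J_\ast$). If you repair this by taking the Zorn poset to consist of pairs $(H,(h_H,c_H))$ with $H\leq G$ an arbitrary subgroup, a different obstruction appears: the maximal $H_\ast$ is typically not finitely generated, so Theorem~\ref{thm:main_finitely_generated} does not apply to it, and one cannot simply glue a cocycle on $H_\ast$ with one on a finitely generated $K\ni g$ along $H_\ast\cap K$ to get a cocycle on $H_\ast+K$, since values such as $c(x,y)$ for $x\in H_\ast\setminus K$ and $y\in K\setminus H_\ast$ remain undetermined. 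Your structural observations---that the restriction maps on $B^3_{2\text{-ab}}$ and on $Z^2_{\mathrm{sym}}$ are surjective, the latter via $\operatorname{Ext}^2(Q,M)=0$---are correct and are exactly what one needs, but they should be fed into a $\varprojlim^1$-vanishing argument for the short exact sequences $0\to Z^2_{\mathrm{sym}}\to C^2\to B^3_{2\text{-ab}}\to 0$ and $0\to B^3_{2\text{-ab}}\to Z^3_{2\text{-ab}}\to H^3_{2\text{-ab}}\to 0$ rather than the Zorn scheme as written.
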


\begin{proof}
By the universal coefficient theorem, we have the exact sequence 
\begin{equation}\label{eq:universal_coefficient}
0 \to \text{Ext}^1(M, (H_{2\text{-ab}})_{n-1}(G,\mathbb{Z})) \to H^n_{2\text{-ab}}(G,M) \to \Hom((H_{2\text{-ab}})_{n}(G,\mathbb{Z}), M) \to 0    
\end{equation}
for arbitrary abelian groups $G$ and $M$.

In the case $n = 2$, using $(H_{2\text{-ab}})_1(G,\mathbb{Z}) = H_1(G,\mathbb{Z})=G$, the exact sequence \eqref{eq:universal_coefficient} becomes 
\begin{equation*}
H^2_{2\text{-ab}}(G,M) = \text{Ext}^1(M,G),
\end{equation*}for every abelian groups $M$. This implies $(H_{2\text{-ab}})_2(G,\mathbb{Z}) = 0$ for any abelian group $G$. 

Now, since $(H_{2\text{-ab}})_2(G,\mathbb{Z}) = 0$ applying \eqref{eq:universal_coefficient} with $n = 3$, we obtain
\begin{equation}\label{eq:homology_3}
H^3_{2\text{-ab}}(G,M) \cong \Hom((H_{2\text{-ab}})_3(G,\mathbb{Z}), M),
\end{equation}
naturally in $G$. 

The functor $H^3_{2\text{-ab}}(-,M)$ is therefore the composition of the functor $(H_{2\text{-ab}})_3(-,\mathbb{Z})$, which commutes with colimits since homology preserves colimits, and the contravariant functor $\Hom(-,M)$, which converts colimits to limits. We thus have the natural isomorphisms:
\begin{align*}
H^3_{2\text{-ab}}\left(\varinjlim G_i, M\right) &\cong \Hom\left((H_{2\text{-ab}})_3\left(\varinjlim G_i, \mathbb{Z}\right), M\right) \\
&\cong \Hom\left(\varinjlim (H_{2\text{-ab}})_3(G_i, \mathbb{Z}), M\right) \\
&\cong \varprojlim \Hom((H_{2\text{-ab}})_3(G_i, \mathbb{Z}), M) \\
&\cong \varprojlim H^3_{2\text{-ab}}(G_i, M).
\end{align*}
\end{proof}

\begin{theorem}[\cite{EM54}]\label{thm:main_general}
Let $G$ be an abelian group and $M$ an abelian group. The trace map 
\begin{equation*}
\operatorname{Tr}: H^3_{2\text{-ab}}(G, M) \to \operatorname{Quad}(G, M) 
\end{equation*}
is an isomorphism.
\end{theorem}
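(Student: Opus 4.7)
The plan is to deduce the general case from Theorem \ref{thm:main_finitely_generated} via a filtered colimit argument, with Lemma \ref{lem:H3_ab_continuous} providing the key nontrivial input. Write $G = \varinjlim_i G_i$ as the filtered colimit of its finitely generated subgroups, with transition maps the inclusions; every finite subset of $G$ lies in some $G_i$.

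By naturality of the trace map in the first argument, the inclusions $G_i \hookrightarrow G$ produce a commutative diagram
\begin{equation*}
\begin{array}{ccc}
H^3_{2\text{-ab}}(G, M) & \xrightarrow{\operatorname{Tr}} & \operatorname{Quad}(G, M) \\
\big\downarrow & & \big\downarrow \\
\varprojlim_i H^3_{2\text{-ab}}(G_i, M) & \xrightarrow{\varprojlim_i \operatorname{Tr}_i} & \varprojlim_i \operatorname{Quad}(G_i, M),
\end{array}
\end{equation*}
in which the vertical arrows are the canonical maps into the inverse limits. The left vertical arrow is an isomorphism by Lemma \ref{lem:H3_ab_continuous}. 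Each $\operatorname{Tr}_i$ is an isomorphism by Theorem \ref{thm:main_finitely_generated}, so the bottom horizontal arrow is an isomorphism as well, since $\varprojlim$ is a functor and therefore preserves isomorphisms of systems.

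It remains to verify that the right vertical arrow is an isomorphism, i.e., that $\operatorname{Quad}(-, M)$ converts filtered colimits of abelian groups into inverse limits. This is immediate from the finitary nature of the quadratic axioms: $q(-a) = q(a)$ involves a single element, and the bilinearity of $b_q$ is expressible by equations involving at most three elements of $G$. Since the colimit is filtered, any finite subset of $G$ lies in some $G_i$, so a compatible system $\{q_i \in \operatorname{Quad}(G_i, M)\}_i$ assembles into a unique function $G \to M$ which is automatically quadratic, and conversely a quadratic form on $G$ restricts to such a compatible system. A short diagram chase then forces $\operatorname{Tr}$ to be an isomorphism. The only substantive obstacle in this argument is Lemma \ref{lem:H3_ab_continuous} itself, which Example \ref{ex:H2_not_continuous} shows cannot be taken for granted, but that work has already been done; by contrast, the continuity of $\operatorname{Quad}(-, M)$ is essentially a tautology about finitary algebraic theories.
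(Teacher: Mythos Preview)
Your proof is correct and follows essentially the same approach as the paper: write $G$ as the filtered colimit of its finitely generated subgroups, invoke Theorem~\ref{thm:main_finitely_generated} on each piece, use Lemma~\ref{lem:H3_ab_continuous} for the continuity of $H^3_{2\text{-ab}}(-,M)$, and observe that $\operatorname{Quad}(-,M)$ sends filtered colimits to inverse limits since the quadratic axioms are finitary. Your presentation via a commutative square and your remark that Example~\ref{ex:H2_not_continuous} shows the continuity of $H^3_{2\text{-ab}}$ is the only nontrivial ingredient are welcome clarifications, but the substance is identical to the paper's argument.
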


\begin{proof}
We follow the approach outlined in \cite[Theorem 12]{JS86report}, which is essentially the same as in \cite{EM54}. The key observation is that every abelian group is a filtered colimit of its finitely generated subgroups.

Let $G$ be an arbitrary abelian group and let $\{G_\alpha\}_{\alpha \in I}$ denote the directed system of finitely generated subgroups of $G$ ordered by inclusion. Then $G = \varinjlim_{\alpha \in I} G_\alpha$.

By Theorem \ref{thm:main_finitely_generated}, the trace map is an isomorphism for each finitely generated group $G_\alpha$. Therefore, the induced map on inverse limits
\begin{equation*}
\varprojlim_{\alpha \in I} H^3_{2\text{-ab}}(G_\alpha, M) \to \varprojlim_{\alpha \in I} \operatorname{Quad}(G_\alpha, M)
\end{equation*}
is an isomorphism.

The functor $\operatorname{Quad}(-, M)$ converts colimits to limits since quadratic forms are determined by their values on elements. More precisely, for a directed system $\{G_\alpha\}_{\alpha \in I}$, a quadratic form on $\varinjlim G_\alpha$ is equivalent to a compatible family of quadratic forms on the $G_\alpha$'s. Thus

\begin{equation*}
\operatorname{Quad}\left(\varinjlim_{\alpha \in I} G_\alpha, M\right) \cong \varprojlim_{\alpha \in I} \operatorname{Quad}(G_\alpha, M).
\end{equation*}

By Lemma \ref{lem:H3_ab_continuous}, we also have 
\begin{equation*}
H^3_{2\text{-ab}}\left(\varinjlim_{\alpha \in I} G_\alpha, M\right) \cong \varprojlim_{\alpha \in I} H^3_{2\text{-ab}}(G_\alpha, M).
\end{equation*}

Combining these isomorphisms, we conclude that the trace map $\operatorname{Tr}: H^3_{2\text{-ab}}(G, M) \to \operatorname{Quad}(G, M)$ is an isomorphism for arbitrary abelian groups $G$.
\end{proof}

We conclude this section with an observation that connects our approach to the original Eilenberg-Mac Lane construction. The functor $G \mapsto \operatorname{Quad}(G,M)$ can be represented by a universal quadratic form. Specifically, there exists a universal quadratic group $\Gamma(G)$ together with a universal quadratic map $\gamma: G \to \Gamma(G)$ such that for any quadratic map $\psi: G \to M$, there exists a unique group homomorphism $\Psi: \Gamma(G) \to M$ satisfying $\Psi \circ \gamma = \psi$. This universal property gives $\operatorname{Quad}(G,M) \cong \Hom(\Gamma(G), M)$ for all abelian groups $M$.

The functor $\Gamma$ is precisely the universal quadratic functor that appears in Whitehead's exact sequence \cite{Whitehead1950}.

\begin{corollary}
For any abelian group $G$, there is a natural isomorphism 
\begin{equation*}
\Gamma(G) \cong (H_{2\text{-ab}})_3(G,\mathbb{Z}).
\end{equation*}
\end{corollary}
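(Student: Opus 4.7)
The plan is to chain together three natural isomorphisms in $M$, and then apply the Yoneda lemma to obtain the desired isomorphism of abelian groups. All three ingredients are already in hand from the main text: the universal property of $\Gamma$, the trace isomorphism, and the universal-coefficient identification from the proof of Lemma \ref{lem:H3_ab_continuous}.

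More concretely, I would first record that the universal property defining $\gamma : G \to \Gamma(G)$ gives a natural isomorphism
\begin{equation*}
\operatorname{Quad}(G,M) \;\cong\; \Hom(\Gamma(G),M),
\end{equation*}
natural in the abelian group $M$ (and in $G$). Next, Theorem \ref{thm:main_general} supplies a natural isomorphism $\operatorname{Tr}: H^3_{2\text{-ab}}(G,M) \xrightarrow{\sim} \operatorname{Quad}(G,M)$; naturality in $M$ is immediate from the pointwise definition of $\operatorname{Tr}$ on cocycles. Finally, equation \eqref{eq:homology_3}, established inside the proof of Lemma \ref{lem:H3_ab_continuous} via the universal coefficient theorem together with the vanishing of $(H_{2\text{-ab}})_2(G,\mathbb{Z})$, provides a natural isomorphism
\begin{equation*}
H^3_{2\text{-ab}}(G,M) \;\cong\; \Hom\bigl((H_{2\text{-ab}})_3(G,\mathbb{Z}),\,M\bigr),
\end{equation*}
again natural in $M$.

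Composing the three isomorphisms yields, for every abelian group $M$, a natural isomorphism
\begin{equation*}
\Hom(\Gamma(G),M) \;\cong\; \Hom\bigl((H_{2\text{-ab}})_3(G,\mathbb{Z}),\,M\bigr),
\end{equation*}
that is, an isomorphism of the two representable functors $\Hom(\Gamma(G),-)$ and $\Hom((H_{2\text{-ab}})_3(G,\mathbb{Z}),-)$ on $\Ab$. The Yoneda lemma then forces a canonical isomorphism $\Gamma(G) \cong (H_{2\text{-ab}})_3(G,\mathbb{Z})$ of abelian groups. To upgrade this to naturality in $G$, I would observe that each of the three isomorphisms above is natural in $G$ as well (the universal property is functorial, $\operatorname{Tr}$ is induced by pullback of cocycles, and the universal-coefficient identification is natural in the space $K(G,2)$), so the resulting isomorphism of representable functors is natural in $G$, and Yoneda again yields naturality of $\Gamma(G)\cong (H_{2\text{-ab}})_3(G,\mathbb{Z})$ in $G$.

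The argument is essentially a bookkeeping exercise, so there is no serious obstacle; the only point that requires care is the naturality of each isomorphism in the coefficient group $M$, since that is what allows the final appeal to Yoneda. In particular, I would double-check that the universal coefficient isomorphism in \eqref{eq:homology_3} is natural in $M$ and not only in $G$, and that the trace map genuinely defines a morphism of functors in $M$. Given those verifications, the corollary follows immediately from Theorem \ref{thm:main_general} and Lemma \ref{lem:H3_ab_continuous} with no additional input.
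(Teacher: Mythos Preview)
Your proposal is correct and follows essentially the same route as the paper: chain the universal property of $\Gamma(G)$, the trace isomorphism, and the identification \eqref{eq:homology_3}, then invoke Yoneda. The paper's proof is terser and leaves the use of Theorem~\ref{thm:main_general} implicit, while you spell out all three links and the naturality checks, but the argument is the same.
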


\begin{proof}
By the universal property of $\Gamma(G)$ and equation \eqref{eq:homology_3}, we have the natural isomorphisms 
\begin{equation*}
\operatorname{Quad}(G,M) \cong \Hom(\Gamma(G), M) \cong \Hom((H_{2\text{-ab}})_3(G,\mathbb{Z}), M)
\end{equation*}
for all abelian groups $M$. By Yoneda's lemma, this implies $\Gamma(G) \cong (H_{2\text{-ab}})_3(G,\mathbb{Z})$.
\end{proof}

\begin{remark}
Eilenberg and Mac Lane's original proof follows precisely this direction: they show that the third homology of the complex $A(G,2)$ is exactly $\Gamma(G)$ and construct the universal map $\gamma: G \to \Gamma(G)$. Moreover, $\Gamma(-)$ is a quadratic functor that preserves colimits, which allows them to reduce their calculations to finitely generated groups and ultimately to cyclic groups.

Our proof follows essentially the opposite direction to that presented by Eilenberg and Mac Lane. However, given our interest in cohomology for applications to fusion categories and homotopy 2-types, and in having specific representatives of cocycles, this approach seems equally natural and more elementary.
\end{remark}
\section{Applications}\label{sec:applications}

\subsection{When quadratic forms are traces of bilinear forms}

For any abelian group $G$ and $M$, we have the natural inclusion $\operatorname{Hom}(G \otimes G, M) \subset Z^3_{2\text{-ab}}(G,M)$ given by $c \mapsto (0,c)$. The trace map $\operatorname{Tr}:Z^3_{2\text{-ab}}(G,M)\to \operatorname{Quad}(G,M)$ restricts to a map $\operatorname{Hom}(G \otimes G, M)\to \operatorname{Quad}(G,M)$, which we also denote by $\operatorname{Tr}$. Via the Eilenberg-Mac Lane isomorphism, this gives rise to the exact sequence
\begin{equation*}\label{eq:exact_sequence_bilinear}
0 \to \operatorname{Hom}(\Lambda^2 G, M) \to \operatorname{Hom}(G \otimes G, M) \xrightarrow{\operatorname{Tr}} \operatorname{Quad}(G,M) \xrightarrow{\cong} H^3_{2\text{-ab}}(G,M) \xrightarrow{s} H^3(G,M),
\end{equation*}
where the isomorphism is the inverse of the trace map on cohomology, and $s: H^3_{2\text{-ab}}(G,M) \to H^3(G,M)$ is the forgetful map given by $s([(h,c)]) = [h]$.

A natural question arising from this sequence concerns when individual quadratic forms can be expressed as traces of bilinear forms.

\begin{lemma}\label{lem:equivalence_bilinear_representability}
For abelian groups $G$ and $M$, the following conditions are equivalent:
\begin{enumerate}
\item  The map $s: H^3_{2\text{-ab}}(G,M) \to H^3(G,M)$ is the zero map.
\item The sequence
\begin{equation*}\label{eq:short_exact_bilinear}
0 \to \operatorname{Hom}(\Lambda^2 G, M) \to \operatorname{Hom}(G \otimes G, M) \xrightarrow{\operatorname{Tr}} \operatorname{Quad}(G,M) \to 0
\end{equation*}
is exact.
\item For every quadratic form $q \in \operatorname{Quad}(G,M)$, there exists a bilinear form $c: G \times G \to M$ such that $q(x) = c(x,x)$ for all $x \in G$.
\end{enumerate}
\end{lemma}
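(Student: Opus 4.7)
The plan is to prove the three conditions equivalent by establishing (2) $\Longleftrightarrow$ (3) and (1) $\Longleftrightarrow$ (2), using the five-term exact sequence displayed immediately before the statement. Both implications amount to unpacking what exactness means at specific terms of that sequence, together with the elementary identification of the kernel of the trace on bilinear forms.

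For (2) $\Longleftrightarrow$ (3), I would first observe that exactness at $\operatorname{Hom}(\Lambda^2 G, M)$ and at $\operatorname{Hom}(G \otimes G, M)$ is automatic: if $c \in \operatorname{Hom}(G \otimes G, M)$ satisfies $c(x,x) = 0$ for all $x$, then expanding $c(x+y,x+y)$ gives $c(x,y) + c(y,x) = 0$, so $c$ factors through $\Lambda^2 G$. Hence the only nontrivial exactness needed in (2) is surjectivity of $\operatorname{Tr}: \operatorname{Hom}(G \otimes G, M) \to \operatorname{Quad}(G,M)$, which is exactly the content of (3).

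For (1) $\Longleftrightarrow$ (2), I would use exactness of the ambient five-term sequence at $\operatorname{Quad}(G,M) \cong H^3_{2\text{-ab}}(G,M)$, which asserts that the image of the trace from bilinear forms coincides (under the Eilenberg--Mac Lane isomorphism) with $\ker(s)$. One direction is direct: if $q(x) = c(x,x)$ for a bilinear $c$, then $q$ corresponds to the class $[(0,c)]$, and $s([(0,c)]) = [0] = 0$. For the converse, if $[(h,c)]$ lies in $\ker(s)$, then $h = \delta(k)$ for some $k \colon G^2 \to M$; subtracting the 2-abelian coboundary $\partial(k) = (\delta(k), \operatorname{Alt}(k))$ gives the cohomologous representative $(0, c - \operatorname{Alt}(k))$, and compatibility conditions \eqref{eq:c-condition-1}--\eqref{eq:c-condition-2} with vanishing first component force the second to be bilinear (as already noted in Example \ref{ex:abelian-3-cocycle-examples}(1)). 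The quadratic form is then the trace of this bilinear form. Granted this exactness, surjectivity of $\operatorname{Tr}: \operatorname{Hom}(G \otimes G, M) \to \operatorname{Quad}(G,M)$ is equivalent to $\ker(s) = H^3_{2\text{-ab}}(G,M)$, that is, to $s = 0$.

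The proof is essentially a diagram-chase, so there is no genuine obstacle. The one place that requires a moment's care is verifying exactness at $\operatorname{Quad}(G,M)$ within the five-term sequence, since it relies on the fact recalled above: a 2-abelian 3-cocycle whose underlying standard 3-cocycle is a coboundary admits a cohomologous representative of the form $(0, c')$ with $c'$ bilinear. Once this is in place, both equivalences follow immediately from standard manipulations.
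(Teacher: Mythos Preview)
Your proposal is correct and follows essentially the same approach as the paper: both derive the equivalences by reading off exactness in the five-term sequence displayed just before the lemma. The paper's proof is a two-line appeal to that sequence, whereas you additionally spell out why the sequence is exact at $\operatorname{Quad}(G,M)$ (by replacing a representative with coboundary $h$-part by one of the form $(0,c')$ with $c'$ bilinear) and why exactness at the first two terms is automatic; this extra care is welcome but does not constitute a different method.
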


\begin{proof}
The equivalence of (1) and (2) follows immediately from the exact sequence \eqref{eq:exact_sequence_bilinear}. The equivalence of (2) and (3) is a direct translation of the exactness condition in terms of quadratic forms and bilinear forms.
\end{proof}

The following lemma shows how this property behaves under direct sums.

\begin{lemma}\label{lem:suspension_image_decomposition}
Under the isomorphism $\Phi : \operatorname{Bil}(G_1 \times G_2, M) \oplus \operatorname{Quad}(G_1, M) \oplus \operatorname{Quad}(G_2, M) \to \operatorname{Quad}(G_1 \oplus G_2, M)$ of Lemma \ref{lem:direct_sum_decomposition}, we have $\operatorname{Bil}(G_1 \times G_2, M) \subset \ker(s)$. In particular, for any decomposition $G = \bigoplus_i G_i$, we have $\operatorname{Im}(s_G) = \bigoplus_i \operatorname{Im}(s_{G_i})$, where $s_G: H^3_{2\text{-ab}}(G,M) \to H^3(G,M)$ denotes the forgetful map.
\end{lemma}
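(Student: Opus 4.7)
The plan is to combine the explicit direct-sum construction of Lemma \ref{lem:direct_sum_construction} with the exact sequence preceding Lemma \ref{lem:equivalence_bilinear_representability}. The key observation is that when one assembles a representative $(h, c)$ on $G_1 \oplus G_2$ out of pieces on each factor, the standard 3-cocycle part is a sum of pullbacks $\pi_1^* h_1 + \pi_2^* h_2$, while the bilinear cross term $f(x_1, y_2)$ contributes nothing to it. Passing through the forgetful map $s$ then converts the quadratic-form decomposition of Lemma \ref{lem:direct_sum_decomposition} into the stated decomposition of images.

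For the first assertion, given $f \in \operatorname{Bil}(G_1 \times G_2, M)$ I would apply Lemma \ref{lem:direct_sum_construction} with $(h_i, c_i) = (0, 0)$. The output is the 2-abelian 3-cocycle $(0, c_f)$ on $G = G_1 \oplus G_2$ with $c_f((x_1, y_1), (x_2, y_2)) = f(x_1, y_2)$, whose trace is $\Phi(f, 0, 0)$ and whose standard 3-cocycle part is zero. In particular $\Phi(f, 0, 0)$ is the trace of a bilinear form, so by the exact sequence of Lemma \ref{lem:equivalence_bilinear_representability} its preimage under $\operatorname{Tr}$ lies in $\ker(s)$. For the second assertion in the case $G = G_1 \oplus G_2$, I would take any $[(h, c)] \in H^3_{2\text{-ab}}(G, M)$, decompose its trace as $\Phi(f, q_1, q_2)$ via Lemma \ref{lem:direct_sum_decomposition}, choose representatives $(h_i, c_i)$ of the classes corresponding to $q_i$ in $H^3_{2\text{-ab}}(G_i, M)$, and invoke Lemma \ref{lem:direct_sum_construction} to produce a cohomologous representative whose standard 3-cocycle part equals $\pi_1^* h_1 + \pi_2^* h_2$. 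Applying $s_G$ then yields
\[
s_G([(h, c)]) = \pi_1^* s_{G_1}([(h_1, c_1)]) + \pi_2^* s_{G_2}([(h_2, c_2)]),
\]
from which $\operatorname{Im}(s_G) = \pi_1^* \operatorname{Im}(s_{G_1}) + \pi_2^* \operatorname{Im}(s_{G_2})$ is immediate; the reverse inclusion follows from the naturality of $s$ in the group homomorphisms $\pi_i$.

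The one nontrivial step is verifying that this sum is internally direct. I would argue that each projection $\pi_i : G \to G_i$ is split by the inclusion $\iota_i : G_i \hookrightarrow G$, so $\pi_i^*$ is a split monomorphism on $H^3(-, M)$ with retraction $\iota_i^*$. Moreover, for $i \neq j$ the composite $\pi_i \circ \iota_j$ is the zero homomorphism, which induces zero on $H^n(-, M)$ for $n \geq 1$ (working with normalized cocycles), so $\iota_j^* \circ \pi_i^*$ is the identity when $i = j$ and is zero otherwise. Applying $\iota_j^*$ to any relation $\pi_1^* \alpha_1 + \pi_2^* \alpha_2 = 0$ with $\alpha_i \in H^3(G_i, M)$ then forces $\alpha_j = 0$, giving the directness. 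Induction on the number of summands extends the two-factor case to any finite decomposition $G = \bigoplus_i G_i$. Everything else is a bookkeeping consequence of Lemmas \ref{lem:direct_sum_decomposition} and \ref{lem:direct_sum_construction}, so this directness verification is the main obstacle.
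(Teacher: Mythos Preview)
Your argument is correct and matches the paper's approach: the paper's proof consists of a single sentence observing, via Lemma~\ref{lem:direct_sum_construction}, that a bilinear cross term corresponds to a cocycle $(0,c_f)$ with trivial $3$-cocycle component and hence lies in $\ker(s)$. The paper does not spell out the ``In particular'' clause at all, so your treatment of the image decomposition---replacing an arbitrary class by the explicit representative of Lemma~\ref{lem:direct_sum_construction} using the trace isomorphism, and then checking directness via the retractions $\iota_i^*$---is more detailed than the original but entirely in its spirit.
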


\begin{proof}
By Lemma \ref{lem:direct_sum_construction}, the 2-abelian 3-cocycle associated to a bilinear form in $\operatorname{Bil}(G_1 \times G_2, M)$ has the form $(0, c_f)$ with trivial 3-cocycle component. Hence it lies in the kernel of $s$.
\end{proof}

We now provide a criterion for when individual quadratic forms can be represented as traces of bilinear forms.

\begin{theorem}\label{thm:individual_quadratic_criterion}
Let $q: G \to M$ be a quadratic form and denote by $|x| \in \mathbb{Z}^{\geq 1} \cup \{\infty\}$ the order of $x \in G$. Then $q$ can be represented as the trace of a bilinear form if and only if $|x|q(x) = 0$ for every element $x \in T_2(G)=\{x\in G| \exists r\in \mathbb{Z}^{\geq 1} \text{ such that } 2^rx=0\}$, the 2-primary component of $G$.
\end{theorem}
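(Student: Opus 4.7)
My plan is to prove necessity by a brief bilinearity calculation and sufficiency by first treating finitely generated $G$ via the primary decomposition and then extending to arbitrary $G$ by a Zorn-type argument. For necessity, assume $q(x)=c(x,x)$ for some bilinear $c:G\times G\to M$. For any $x$ of finite order $n$, bilinearity gives $nq(x)=nc(x,x)=c(nx,x)=c(0,x)=0$. Since every element of $G_2$ has finite order (a power of $2$), the condition $|x|q(x)=0$ holds on all of $G_2$.

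For sufficiency with $G$ finitely generated, I would use the primary decomposition $G=\bigoplus_i\langle e_i\rangle$ where each $|e_i|=n_i$ is either a prime power or $\infty$, and define $c:G\times G\to M$ on generators by $c(e_i,e_i):=q(e_i)$, $c(e_i,e_j):=b_q(e_i,e_j)$ for $i<j$, and $c(e_j,e_i):=0$ for $i<j$, extending $\mathbb{Z}$-bilinearly. Well-definedness reduces to $n_iq(e_i)=0$ for each $i$: vacuous when $n_i=\infty$, automatic when $n_i$ is an odd prime power (since the general identity $\gcd(n_i^2,2n_i)q(e_i)=0$ becomes $n_iq(e_i)=0$ for odd $n_i$), and exactly the hypothesis when $n_i$ is a power of $2$, i.e.\ when $e_i\in G_2$. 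A direct expansion using $q(\sum a_ie_i)=\sum a_i^2q(e_i)+\sum_{i<j}a_ia_jb_q(e_i,e_j)$ then confirms $c(x,x)=q(x)$.

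For arbitrary $G$, I would first observe that the hypothesis on $G_2$ actually forces $|g|q(g)=0$ for every torsion $g$: decomposing $g=\sum_pg_p$ along primary components (via the Chinese remainder theorem inside $\langle g\rangle$) and expanding, the cross terms $|g|b_q(g_p,g_{p'})$ vanish because $|g|g_p=0$, the odd-primary diagonals vanish automatically by the quadratic identity, and the $2$-primary diagonal is controlled by the hypothesis applied to $g_2$. I would then apply Zorn's lemma to the poset of compatible pairs $(H,c_H)$ with $c_H:H\times H\to M$ bilinear and $c_H(x,x)=q(x)$ for $x\in H$. The extension of a maximal $(H,c_H)$ across an element $g\in G\setminus H$ splits into two subcases: when $\langle g\rangle\cap H=0$ it reduces to Lemma \ref{lem:direct_sum_construction} applied to the direct sum $H\oplus\langle g\rangle$, and when $\langle g\rangle\cap H=\langle ng\rangle\ne 0$ one must satisfy the divisibility constraint $nc'(g,-)=c_H(ng,-)$ in $\operatorname{Hom}(H,M)$ (and symmetrically on the other side), after first adjusting $c_H$ by an alternating bilinear form.

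The main obstacle will be precisely this second extension subcase: the homomorphism $c_H(ng,-)\in\operatorname{Hom}(H,M)$ is not automatically $n$-divisible, and resolving this requires exploiting the torsor structure of traces of $q|_H$ over $\operatorname{Hom}(\Lambda^2H,M)$ to modify $c_H$ appropriately. The preliminary observation that $|g|q(g)=0$ on all torsion elements, together with the hypothesis on $G_2$, is what ensures the cohomological obstruction to this adjustment vanishes. A cleaner alternative route may be to reformulate the problem via Lemma \ref{lem:equivalence_bilinear_representability} as $s([q])=0\in H^3(G,M)$ and analyze this condition through naturality of $s$ together with the explicit cyclic-case computation, taking care because ordinary $H^3(-,M)$ does not itself preserve filtered colimits.
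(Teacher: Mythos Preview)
Your necessity argument and your treatment of the finitely generated case are both correct and essentially match the paper's approach: the paper likewise reduces to cyclic summands via Lemmas~\ref{lem:direct_sum_decomposition}--\ref{lem:direct_sum_construction} (packaged as Lemma~\ref{lem:suspension_image_decomposition}) and handles the three cyclic cases exactly as you do.

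The genuine gap is in your passage to arbitrary $G$ by Zorn's lemma. The obstacle you flag is real, but your proposed fix---``adjusting $c_H$ by an alternating bilinear form''---does not rescue the argument. If $(H,c_H)$ is maximal in your poset and you replace $c_H$ by $c_H+\varepsilon$ with $\varepsilon\in\Hom(\Lambda^2 H,M)$, you have produced a \emph{different}, incomparable element $(H,c_H+\varepsilon)$; extending that pair across $g$ yields $(H+\langle g\rangle,c')>(H,c_H+\varepsilon)$, which says nothing about the maximality of $(H,c_H)$. Concretely, take $q\equiv 0$ on $G=(\Z/4\Z)^{(\mathbb N)}$, $M=\Z/2\Z$, and start from $H=2G\cong(\Z/2\Z)^{(\mathbb N)}$ with $c_H(x,y)=x_1y_2+x_2y_1$. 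Any extension across an element $g$ with $g_1$ or $g_2$ odd forces $2c'(g,2e_j)=c_H(2g,2e_j)=1$ in $\Z/2\Z$, which is impossible; one checks that the resulting maximal pair has $H'=\{g:g_1,g_2\in 2\Z/4\Z\}\subsetneq G$. So maximal pairs with $H\ne G$ exist even when $q$ is trivially a trace, and your torsion hypothesis does not prevent this.

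The paper avoids the issue entirely by invoking Lemma~\ref{lem:H3_ab_continuous} to reduce to finitely generated subgroups. This is precisely your ``cleaner alternative route'', but executed through $H^3_{2\text{-}\mathrm{ab}}$ rather than $H^3$: since $H^3_{2\text{-}\mathrm{ab}}(G,M)\cong\Hom(\Gamma(G),M)$ with $\Gamma$ colimit-preserving, and likewise $\operatorname{Bil}(G\times G,M)=\Hom(G\otimes G,M)$, the trace map is induced by a natural map $\tau\colon\Gamma(G)\to G\otimes G$ and the condition ``$q$ is a trace'' is governed by the colimit-preserving functor $\ker\tau$. This sidesteps both the failure of $H^3(-,M)$ to preserve filtered colimits and the divisibility obstruction that derails the Zorn argument.
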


\begin{proof}
By Lemma \ref{lem:H3_ab_continuous}, we may reduce to the case of finitely generated groups. For a finitely generated group $G$, we can write $G = \bigoplus_i G_i$ where each $G_i$ is cyclic. By Lemma \ref{lem:suspension_image_decomposition}, the problem reduces to cyclic groups.

\textbf{Case 1: Infinite cyclic groups.} For $G = \mathbb{Z}$, we have $T_2(G) = \{0\}$, so the condition is vacuous. By Example \ref{ex:abelian-3-cocycle-examples}(2), every 2-abelian 3-cocycle has the form $(0, c_m)$ with $c_m(x,y) = xym$. Hence every quadratic form can be represented as the trace of a bilinear form.

\textbf{Case 2: Finite cyclic groups of odd order.} For $G = \mathbb{Z}/n\mathbb{Z}$ with $n$ odd, we have $T_2(G) = \{0\}$. The condition is again vacuous, and by Example \ref{ex:abelian-3-cocycle-examples}(3), every 2-abelian 3-cocycle can be represented by a bilinear form.

\textbf{Case 3: Finite cyclic groups of $2^k$ order.} For $G = \mathbb{Z}/2^k\mathbb{Z}$ with $k \geq 1$, let $q \in \operatorname{Quad}(G,M)$ be a quadratic form. By Example \ref{ex:quadratic-forms-examples}(2), we have $q(x) = x^2 q(1)$ for some $q(1) \in M[2^{k+1}]$.

If $q$ is the trace of a bilinear form $c$, then $q(1) = c(1,1)$ is the image of a group homomorphism $G \to M$, and hence $2^k q(1) = 0$. The same condition then holds for all other elements of $G$.

Conversely, if $2^k q(1) = 0$, then defining $c(x,y) = xy q(1)$ gives a bilinear form with $\operatorname{Tr}(c) = q$.
\end{proof}

Let us denote by $\operatorname{Quad}_0(G,M) = \{q \in \operatorname{Quad}(G,M) : |x|q(x) = 0 \text{ for all } x \in T_2(G)\}$ the subgroup of quadratic forms satisfying the condition of Theorem \ref{thm:individual_quadratic_criterion} on the 2-primary component, or equivalently, exactly those that are in the kernel of the forgetful map.

\begin{proposition}\label{prop:image_suspension_map}
The image of the forgetful map $s: H^3_{2\text{-ab}}(G,\mathbb{Q}/\mathbb{Z}) \to H^3(G,\mathbb{Q}/\mathbb{Z})$ is naturally isomorphic to $\operatorname{Hom}(T_2(G)/2T_2(G), \mathbb{Q}/\mathbb{Z})$.
\end{proposition}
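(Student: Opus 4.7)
My plan is to unpack $\operatorname{Im}(s)$ as a quotient of $H^3_{2\text{-ab}}(G,M)$ using the exact sequence already displayed in Section 4.1, reduce the computation to cyclic $2$-groups via the direct-sum machinery of Section 3 and Lemma \ref{lem:suspension_image_decomposition}, settle the cyclic case by direct calculation using the classification in Example \ref{ex:abelian-3-cocycle-examples}(3), and finally bootstrap to arbitrary $G$ via the continuity established in Lemma \ref{lem:H3_ab_continuous}.

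\textbf{Step 1 (Reformulation).} The exact sequence preceding Lemma \ref{lem:equivalence_bilinear_representability} yields a canonical isomorphism $\operatorname{Im}(s) \cong H^3_{2\text{-ab}}(G,M)/\ker(s)$. Transporting through the Eilenberg--Mac Lane isomorphism $\operatorname{Tr}$, the kernel of $s$ corresponds to the image of $\operatorname{Hom}(G\otimes G,M)\xrightarrow{\operatorname{Tr}}\operatorname{Quad}(G,M)$, which by Theorem \ref{thm:individual_quadratic_criterion} is exactly $\operatorname{Quad}_0(G,M)$. Hence naturally $\operatorname{Im}(s)\cong \operatorname{Quad}(G,M)/\operatorname{Quad}_0(G,M)$, and the task becomes to compare this quotient with $\operatorname{Hom}(G_2/2G_2,M)$.

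\textbf{Step 2 (Additivity; reduction to cyclic groups).} For a finite direct sum decomposition $G=\bigoplus_i G_i$, Lemmas \ref{lem:direct_sum_decomposition} and \ref{lem:suspension_image_decomposition} show that $\operatorname{Quad}/\operatorname{Quad}_0$ decomposes as $\bigoplus_i \operatorname{Quad}(G_i,M)/\operatorname{Quad}_0(G_i,M)$, because the bilinear cross-terms lie entirely in $\operatorname{Quad}_0$. On the other hand, $G_2/2G_2$ is additive in $G$, so $\operatorname{Hom}(G_2/2G_2,M)$ splits into the analogous direct sum. Thus it suffices to verify the isomorphism on each cyclic summand, and to check naturality of the comparison on the cyclic building blocks.

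\textbf{Step 3 (Cyclic case).} For $G=\mathbb{Z}$ and $G=\mathbb{Z}/n\mathbb{Z}$ with $n$ odd, both sides are zero: the target because $G_2=0$, and the source because every quadratic form lifts to a bilinear form by Theorem \ref{thm:individual_quadratic_criterion}. The real work is in the case $G=\mathbb{Z}/2^k\mathbb{Z}$. Using Example \ref{ex:quadratic-forms-examples}(2), identify $\operatorname{Quad}(\mathbb{Z}/2^k\mathbb{Z},M)\cong M[2^{k+1}]$ via $q\mapsto q(1)$, and identify $\operatorname{Quad}_0\cong M[2^k]$ via the criterion $2^k q(1)=0$. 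Define the natural comparison by $q\mapsto 2^{k}q(1)\in M[2]$, matching the identification $\operatorname{Hom}(G_2/2G_2,M)=\operatorname{Hom}(\mathbb{Z}/2,M)=M[2]$. Injectivity is immediate; for surjectivity one exhibits, for each $m'\in M[2]$, an explicit $q$ with $2^k q(1)=m'$ using the cocycles $(h_{2^k m},c_m)$ of Example \ref{ex:abelian-3-cocycle-examples}(3).

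\textbf{Step 4 (Passage to arbitrary $G$) and main obstacle.} Write $G=\varinjlim G_\alpha$ over its finitely generated subgroups. The functor $H^3_{2\text{-ab}}(-,M)$ is continuous by Lemma \ref{lem:H3_ab_continuous}, and $G\mapsto \operatorname{Hom}(G_2/2G_2,M)$ converts filtered colimits to limits, so the natural transformation built in Step 3 extends uniquely. The main obstacle I anticipate is the cyclic $2$-group computation of Step 3: verifying that the map $m\mapsto 2^k q(1)$ lands \emph{onto} $M[2]$ really uses that every $2$-abelian $3$-cocycle on $\mathbb{Z}/2^k\mathbb{Z}$ admits the explicit representative from Example \ref{ex:abelian-3-cocycle-examples}(3), together with care about the naturality of the resulting isomorphism in both $G$ and $M$ so that the decomposition in Step 2 actually assembles into a single natural isomorphism rather than merely an abstract one.
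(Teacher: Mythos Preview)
Your overall architecture matches the paper's: both identify $\operatorname{Im}(s)\cong\operatorname{Quad}(G,M)/\operatorname{Quad}_0(G,M)$ via Theorem~\ref{thm:individual_quadratic_criterion}, and both compare this quotient with $\operatorname{Hom}(G_2/2G_2,M)$ through the map $q\mapsto |x|\,q(x)$. The paper writes this globally as $\theta(q)(x+2G_2)=|x|\,q(x)$, which on $G=\mathbb{Z}/2^k\mathbb{Z}$ specializes to exactly your comparison $q\mapsto 2^{k}q(1)$. So the route you take is essentially the paper's, with the extra (and unproblematic) Step~4 bookkeeping via Lemma~\ref{lem:H3_ab_continuous}.

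There is, however, a genuine gap in Step~3, and it cannot be repaired. You assert that for each $m'\in M[2]$ one can exhibit $q$ with $2^{k}q(1)=m'$; this requires $m'$ to be $2^{k}$-divisible in $M$, which is not automatic. Take $G=\mathbb{Z}/2^{k}\mathbb{Z}$ with $k\ge 1$ and $M=\mathbb{Z}/2\mathbb{Z}$. Then $M[2^{k+1}]=M[2^{k}]=\mathbb{Z}/2\mathbb{Z}$, so $\operatorname{Quad}(G,M)/\operatorname{Quad}_0(G,M)=0$, and your map $q\mapsto 2^{k}q(1)$ is identically zero; yet $\operatorname{Hom}(G_2/2G_2,M)=\operatorname{Hom}(\mathbb{Z}/2,\mathbb{Z}/2)\cong\mathbb{Z}/2$. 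In terms of the explicit representatives of Example~\ref{ex:abelian-3-cocycle-examples}(3), the $3$-cocycle part is $h_{nm}$ with $nm=2^{k}m=0$ in $\mathbb{Z}/2$, so $s$ is the zero map. Thus $\operatorname{Im}(s)=0\not\cong\mathbb{Z}/2$. The paper's proof checks only that $\theta$ is well-defined with kernel containing $\operatorname{Quad}_0$, and never verifies surjectivity, so it shares this gap; the statement as written fails without an additional hypothesis on $M$ (for instance $2$-divisibility, under which your Step~3 surjectivity argument does go through).
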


\begin{proof}
We define a homomorphism $\theta: \operatorname{Quad}(T_2(G), \mathbb{Q}/\mathbb{Z}) \to \operatorname{Hom}(T_2(G)/2T_2(G), \mathbb{Q}/\mathbb{Z})$ by 
\begin{equation*}
\theta(q)(x + 2T_2(G)) := |x|q(x)
\end{equation*}
for $x \in T_2(G)$.

First, we verify that $\theta(q)$ is well-defined. If $x$ has order $2n$, then $2x$ has order $n$, and thus 
\begin{equation*}
|2x|q(2x) = n \cdot 4q(x) = 4nq(x) = 2|x|q(x) = 0.
\end{equation*}
Therefore, the function $\theta(q): T_2(G)/2T_2(G) \to \mathbb{Q}/\mathbb{Z}$ is well-defined.

Next, we show that $\theta(q)$ is a group homomorphism. Note that elements in $\operatorname{Quad}_0(T_2(G), \mathbb{Q}/\mathbb{Z})$ map to zero under $\theta$. In particular, by the decomposition of Lemma \ref{lem:direct_sum_construction}, quadratic forms arising from bilinear forms in direct sum decompositions map to zero under $\theta$. Therefore, it suffices to consider the cyclic case, where for any $x \in T_2(G)$, we have $2\theta(q)(x + 2T_2(G)) = 2|x|q(x) = 0$.

By Theorem \ref{thm:individual_quadratic_criterion}, the kernel of $s$ consists precisely of those quadratic forms in $\operatorname{Quad}_0(G, \mathbb{Q}/\mathbb{Z})$. The image of $s$ corresponds to the quotient $\operatorname{Quad}(T_2(G), \mathbb{Q}/\mathbb{Z})/\operatorname{Quad}_0(T_2(G), \mathbb{Q}/\mathbb{Z})$, and the map $\theta$ induces the desired isomorphism between this quotient and $\operatorname{Hom}(T_2(G)/2T_2(G), \mathbb{Q}/\mathbb{Z})$.
\end{proof}

\begin{corollary}\label{cor:global_characterization}
Let $G$ be an abelian group. The following conditions are equivalent:
\begin{enumerate}
\item For every abelian group $M$, every quadratic form $q \in \operatorname{Quad}(G,M)$ can be expressed as $q(x) = c(x,x)$ for some bilinear form $c: G \times G \to M$.
\item For every abelian group $M$, the forgetful map $s: H^3_{2\text{-ab}}(G,M) \to H^3(G,M)$ is the zero map.
\item $G$ is 2-torsion free.
\end{enumerate}
\end{corollary}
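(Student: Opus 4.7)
The plan is to derive the three-way equivalence as an almost immediate consequence of Lemma \ref{lem:equivalence_bilinear_representability} and Proposition \ref{prop:image_suspension_map}. The former gives the equivalence (1) $\Leftrightarrow$ (2) pointwise in $M$, while the latter identifies $\operatorname{Im}(s)$ explicitly with $\operatorname{Hom}(G_2/2G_2, M)$; the remaining equivalence with (3) then reduces to a short group-theoretic observation about when $G_2/2G_2$ vanishes for all coefficient choices.

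First I would dispatch (1) $\Leftrightarrow$ (2) by universally quantifying Lemma \ref{lem:equivalence_bilinear_representability} over $M$: both clauses of the corollary carry the same ``for every $M$'' quantifier and Lemma \ref{lem:equivalence_bilinear_representability} already provides the equivalence on each coefficient group. Next, for (3) $\Rightarrow$ (2), if $G$ is 2-torsion free then $G_2 = 0$, whence $G_2/2G_2 = 0$, and Proposition \ref{prop:image_suspension_map} gives $\operatorname{Im}(s) \cong \operatorname{Hom}(0, M) = 0$ for every $M$.

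For the remaining implication (2) $\Rightarrow$ (3), the hypothesis together with Proposition \ref{prop:image_suspension_map} forces $\operatorname{Hom}(G_2/2G_2, M) = 0$ for every abelian group $M$. Evaluating at $M = G_2/2G_2$ on the identity homomorphism collapses this to $G_2/2G_2 = 0$. Reading $G_2 = \{x \in G : 2x = 0\}$ as the literal 2-torsion subgroup one has $2G_2 = 0$ automatically, so the quotient equality yields $G_2 = 0$, i.e., $G$ is 2-torsion free.

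I expect no serious obstacle, since once Proposition \ref{prop:image_suspension_map} is available the corollary is essentially an exercise in Yoneda-style testing of a $\operatorname{Hom}$-functor on its own output. The only point requiring care is keeping the interpretation of $G_2$ consistent between Proposition \ref{prop:image_suspension_map} and the conclusion ``$G$ is 2-torsion free''; if one wants to read $G_2$ as the full 2-primary part, then the step $G_2/2G_2 = 0 \Rightarrow G_2 = 0$ is not tautological and should be handled by the filtered-colimit reduction of Lemma \ref{lem:H3_ab_continuous} to the finitely generated case, where $G_2$ is a finite 2-group and hence is 2-divisible only when trivial.
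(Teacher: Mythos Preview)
Your argument coincides with the paper's: derive $(1)\Leftrightarrow(2)$ from Lemma~\ref{lem:equivalence_bilinear_representability} and reduce $(2)\Leftrightarrow(3)$, via Proposition~\ref{prop:image_suspension_map}, to the claim $G_2/2G_2=0\iff G_2=0$. The paper asserts this last equivalence without comment; you are right to isolate it as the only nontrivial step.

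Your caution is well placed and in fact points to a genuine gap rather than a notational ambiguity. Throughout the paper $G_2$ denotes the full $2$-primary component (the proof of Theorem~\ref{thm:individual_quadratic_criterion} applies the criterion to a generator of order $2^k$, not only to elements killed by $2$), so your first reading $G_2=\{x:2x=0\}$ is not the intended one and cannot be substituted into Proposition~\ref{prop:image_suspension_map}. Under the intended reading the implication $G_2/2G_2=0\Rightarrow G_2=0$ fails for the Pr\"ufer group $G=\mathbb{Z}(2^\infty)$, and your filtered-colimit remedy does not repair it: the vanishing of the colimit $G_2/2G_2=\varinjlim (G_\alpha)_2/2(G_\alpha)_2$ does not force the individual terms to vanish, nor do conditions (1)--(2) for $G$ descend to finitely generated subgroups. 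Indeed $\operatorname{Quad}(\mathbb{Z}(2^\infty),M)=0$ for every $M$ (the polar form $b_q$ factors through $G\otimes G=0$, so $q$ is a homomorphism into $M[2]$ and hence vanishes on a $2$-divisible group), so (1) and (2) hold vacuously while (3) fails. The step you flagged is therefore an actual defect in the stated equivalence, not merely a missing line of justification.
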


\begin{proof}
We establish the equivalences by showing $(1) \Leftrightarrow (2) \Rightarrow (3) \Rightarrow (1)$.

The equivalence of (1) and (2) follows immediately from Lemma \ref{lem:equivalence_bilinear_representability}.

For $(3) \Rightarrow (1)$, we apply Theorem \ref{thm:individual_quadratic_criterion} directly.

For $(2) \Rightarrow (3)$, we use the characterization given in Proposition \ref{prop:image_suspension_map}: the forgetful map vanishes on $\mathbb{Q}/\mathbb{Z}$ if and only if $T_2(G)/2T_2(G) = 0$. Since this quotient is trivial precisely when $T_2(G) = 0$, condition (2) forces (3) to hold.
\end{proof}

\begin{remark}
Corollary \ref{cor:global_characterization} provides an affirmative answer to the question posed in \cite[Section 12]{Braunling21} about whether torsion-free abelian groups satisfy condition (1). Moreover, Theorem \ref{thm:individual_quadratic_criterion} gives a precise criterion for individual quadratic forms, and Proposition \ref{prop:image_suspension_map} characterizes the obstruction group explicitly as $\operatorname{Hom}(T_2(G)/2T_2(G), M)$.
\end{remark}

\subsection{Connection to homotopy 2-types and categorical groups}

We now establish connections between our results and the theory of categorical groups, which provide algebraic models for connected homotopy 2-types. For a connected homotopy 2-type $X$, the second Postnikov invariant $k_2 \in H^3_{2\text{-ab}}(\pi_1(X), \pi_2(X))$ corresponds to a quadratic form $q: \pi_1(X) \to \pi_2(X)$. The triviality of the first Postnikov invariant $k_1 \in H^3(\pi_1(X), \pi_2(X))$ is equivalent to the quadratic form being representable as the trace of a bilinear form.

\begin{corollary}\label{cor:individual_homotopy_criterion}
Let $X$ be a connected homotopy 2-type. The Postnikov invariant $k_1 \in H^3(\pi_1(X),\pi_2(X))$ is trivial if and only if the corresponding quadratic form $q: \pi_1(X) \to \pi_2(X)$ satisfies $|x|q(x) = 0$ for every element $x \in T_2(\pi_1(X))$, where $T_2(\pi_1(X))$ is the 2-primary component.
\end{corollary}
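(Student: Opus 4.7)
The plan is to read off the corollary as a direct translation of Theorem \ref{thm:individual_quadratic_criterion} through the dictionary between connected homotopy 2-types and $2$-abelian cohomology. So there is no new computation to perform; the work consists entirely in identifying the two Postnikov invariants of $X$ with the two objects appearing in the exact sequence \eqref{eq:exact_sequence_bilinear}.

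First I would recall the classification: a connected homotopy 2-type $X$ with $\pi_1 = \pi_1(X)$ abelian and $\pi_2 = \pi_2(X)$ is determined up to weak equivalence by a class $k_2 \in H^3_{2\text{-ab}}(\pi_1,\pi_2)$, which corresponds under the trace isomorphism of Theorem \ref{thm:main_general} to the quadratic form $q = \operatorname{Tr}(k_2) \in \operatorname{Quad}(\pi_1,\pi_2)$ (this $q$ is, concretely, the Whitehead product $\pi_1 \to \pi_2$ after choosing a representative $2$-abelian cocycle $(h,c)$ and setting $q(x) = c(x,x)$). The first Postnikov invariant $k_1 \in H^3(\pi_1,\pi_2)$ is then the image of $k_2$ under the forgetful map $s: H^3_{2\text{-ab}}(\pi_1,\pi_2) \to H^3(\pi_1,\pi_2)$ of the exact sequence \eqref{eq:exact_sequence_bilinear}, i.e.\ $k_1 = s(k_2) = [h]$.

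Next I would invoke Lemma \ref{lem:equivalence_bilinear_representability} to rewrite the condition $k_1 = 0$: the class $s(k_2)$ vanishes precisely when the quadratic form $q$ is the trace of a bilinear form $c: \pi_1 \times \pi_1 \to \pi_2$. Finally I would apply Theorem \ref{thm:individual_quadratic_criterion} to the group $G = \pi_1(X)$ and coefficients $M = \pi_2(X)$, which translates this representability into the torsion condition $|x|q(x) = 0$ for every $x$ in the $2$-torsion subgroup $\pi_1(X)_2$. Chaining the three equivalences yields the corollary.

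The only potential obstacle is the bookkeeping in the first step: one must be confident that, with the sign and normalization conventions used throughout the paper (in particular in Definition \ref{def:trace-map} and equation \eqref{eq: S matrix}), the quadratic form attached to $X$ by classical homotopy theory really is $c(x,x)$ and that the ordinary cohomology class underlying $k_2$ is literally $[h]$. Once these identifications are in place, the corollary is a formal consequence of Theorem \ref{thm:individual_quadratic_criterion} and requires no further argument.
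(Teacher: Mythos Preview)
Your proposal is correct and follows essentially the same approach as the paper, which simply states that the corollary follows immediately from Theorem \ref{thm:individual_quadratic_criterion}. You have spelled out in more detail the dictionary (set up in the paragraph preceding the corollary) identifying $k_2$ with a class in $H^3_{2\text{-ab}}(\pi_1,\pi_2)$, $q$ with its trace, and $k_1$ with $s(k_2)$, but the logical content is identical.
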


\begin{proof}
This follows immediately from Theorem \ref{thm:individual_quadratic_criterion}.
\end{proof}

\begin{corollary}\label{cor:homotopy_2_types}
Let $G$ be an abelian group. For any connected homotopy 2-type $X$ with $\pi_1(X) \cong G$, the first Postnikov invariant $k_1 \in H^3(\pi_1(X), \pi_2(X))$ is trivial if and only if $G$ is 2-torsion free.
\end{corollary}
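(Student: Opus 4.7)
The plan is to derive this statement as a short consequence of Corollary \ref{cor:individual_homotopy_criterion} combined with the global characterization in Corollary \ref{cor:global_characterization}. The bridge between topology and algebra is classical: connected homotopy 2-types $X$ with $\pi_1(X) \cong G$ and $\pi_2(X) \cong M$ are classified up to weak equivalence by cohomology classes in $H^3_{2\text{-ab}}(G, M)$, and under the Eilenberg--Mac Lane isomorphism of Theorem \ref{thm:main_general} this class corresponds to the associated quadratic form $q : G \to M$; the first Postnikov invariant $k_1$ is its image under the forgetful map $s : H^3_{2\text{-ab}}(G, M) \to H^3(G, M)$.

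For the forward direction, I would observe that if $G$ is 2-torsion free, then $G_2 = 0$, so the torsion condition $|x|q(x) = 0$ of Corollary \ref{cor:individual_homotopy_criterion} is vacuously satisfied for every $X$ with $\pi_1(X) \cong G$ regardless of the coefficient group $\pi_2(X)$. Hence $k_1 = 0$ uniformly across all such $X$.

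For the converse, I would argue by contrapositive. Suppose $G$ is not 2-torsion free. I need to exhibit at least one connected homotopy 2-type $X$ with $\pi_1(X) \cong G$ and non-trivial $k_1$. By Corollary \ref{cor:global_characterization}, there exists a coefficient group $M$ and a quadratic form $q \in \operatorname{Quad}(G, M)$ that is not the trace of any bilinear form; equivalently, the corresponding class $[(h, c)] \in H^3_{2\text{-ab}}(G, M)$ has $s([(h, c)]) \neq 0$. Realizing this class as the second Postnikov invariant of a 2-type $X$ with $\pi_1(X) = G$ and $\pi_2(X) = M$ produces the desired counterexample.

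The main (mild) obstacle is citing cleanly the classification and realization result for homotopy 2-types in terms of $H^3_{2\text{-ab}}(\pi_1, \pi_2)$; this is the classical theorem of Mac Lane--Whitehead (reformulated through symmetric/braided categorical groups) and needs no new argument. Beyond that citation, the proof is a direct repackaging of results already established in this paper and will be only a few lines long.
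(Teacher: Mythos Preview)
Your proposal is correct and follows essentially the same approach as the paper. The paper's own proof is simply a terser version of yours: it invokes the correspondence between $k_1$ and the representability condition from Theorem~\ref{thm:individual_quadratic_criterion}, then asserts that this condition holds for all possible $\pi_2(X)$ precisely when $G$ is 2-torsion free. You are slightly more explicit in two places---citing Corollary~\ref{cor:global_characterization} rather than Theorem~\ref{thm:individual_quadratic_criterion} for the ``for all $M$'' direction, and naming the Mac Lane--Whitehead realization step---but these are expansions of the same argument rather than a different route.
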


\begin{proof}
By the correspondence between Postnikov invariants and quadratic forms, the condition that the Postnikov invariant is trivial is equivalent to the condition that the quadratic form $q: G \to \pi_2(X)$ satisfies $|x|q(x) = 0$ for all $x \in T_2(G)$. By Theorem \ref{thm:individual_quadratic_criterion}, this occurs for all possible $\pi_2(X)$ precisely when $G$ is 2-torsion free.
\end{proof}

\begin{remark}
The results of this subsection show that the 2-torsion of the fundamental group is the primary obstruction to triviality of Postnikov invariants for homotopy 2-types. This provides a group-theoretic criterion for when homotopy 2-types admit particularly simple algebraic models.
\end{remark}

\subsection{Explicit formulas for quadratic forms and 2-abelian 3-cocycles}

Let $G$ be a finitely generated abelian group with primary decomposition $G \cong T \oplus F$ where 
\[T = \bigoplus_{i=1}^k \mathbb{Z}/n_i\mathbb{Z} \quad \text{and} \quad F = \mathbb{Z}^r\]
with $n_1 | n_2 | \cdots | n_k$.

\begin{proposition}\label{prop:quad_structure}
The group of quadratic forms admits the decomposition
\begin{align*}
\operatorname{Quad}(G,M) \cong 
&\overbrace{
    \bigoplus_{i=1}^k M[\gcd(2n_i, n_i^2)] \oplus \bigoplus_{1 \le i < j \le k} M[n_i]
}^{\text{Quadratic forms on } T} \\
&\oplus \overbrace{
    M^r \oplus \bigoplus_{1 \le i < j \le r} M
}^{\text{Quadratic forms on } F} \\
&\oplus \overbrace{
    \bigoplus_{i=1}^k \bigoplus_{j=1}^r M[n_i]
}^{\text{Bilinear forms } T \times F \to M}.
\end{align*}
Moreover, any quadratic form $q: G \to M$ has the unique expression
\begin{align*}\label{eq:quadratic_formula}
q(a_1,\ldots,a_k,x_1,\ldots,x_r) = 
&\overbrace{\sum_{i=1}^k a_i^2 m_i}^{\text{quadratic on torsion}} 
+ \overbrace{\sum_{1 \le i < j \le k} a_i a_j b_{ij}}^{\text{torsion cross-terms}} \\
&+ \overbrace{\sum_{j=1}^r x_j^2 \ell_j}^{\text{quadratic on free}} 
+ \overbrace{\sum_{1 \le j < l \le r} x_j x_l f_{jl}}^{\text{free cross-terms}} 
+ \overbrace{\sum_{i=1}^k \sum_{j=1}^r a_i x_j t_{ij}}^{\text{torsion-free interaction}},
\end{align*}
where the parameters satisfy:
\begin{itemize}
\item $m_i \in M[\gcd(2n_i, n_i^2)]$ for each cyclic factor,
\item $b_{ij} \in M[n_i]$ for $1 \leq i < j \leq k$,
\item $\ell_j \in M$ for free quadratic terms,
\item $f_{jl} \in M$ for free cross-terms,
\item $t_{ij} \in M[n_i]$ for torsion-free interactions.
\end{itemize}
\end{proposition}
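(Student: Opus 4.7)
The plan is to derive the decomposition by iterating Lemma \ref{lem:direct_sum_decomposition} along the primary decomposition $G \cong (\bigoplus_{i=1}^k \mathbb{Z}/n_i\mathbb{Z}) \oplus \mathbb{Z}^r$ and identifying each resulting summand explicitly. A straightforward induction starting from Lemma \ref{lem:direct_sum_decomposition} yields a canonical isomorphism
\begin{equation*}
\operatorname{Quad}\Bigl(\bigoplus_{\alpha} G_\alpha, M\Bigr) \cong \bigoplus_{\alpha} \operatorname{Quad}(G_\alpha, M) \oplus \bigoplus_{\alpha < \beta} \operatorname{Bil}(G_\alpha \times G_\beta, M)
\end{equation*}
for any finite family of abelian groups $\{G_\alpha\}$, under which a tuple $((q_\alpha),(f_{\alpha\beta}))$ corresponds to the quadratic form $\sum_\alpha q_\alpha \circ \pi_\alpha + \sum_{\alpha<\beta} f_{\alpha\beta} \circ (\pi_\alpha,\pi_\beta)$.

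Applying this to the primary decomposition of $G$, the diagonal pieces come directly from Example \ref{ex:quadratic-forms-examples}: $\operatorname{Quad}(\mathbb{Z}, M) \cong M$, contributing the parameters $\ell_j$, and $\operatorname{Quad}(\mathbb{Z}/n_i\mathbb{Z}, M) \cong M[\gcd(2n_i, n_i^2)]$, contributing the parameters $m_i$. For the cross-terms I would use that a bilinear form on a product of cyclic groups is determined by its value on the pair of generators, and that value must be annihilated by each of the two orders; this yields $\operatorname{Bil}(\mathbb{Z}/n \times \mathbb{Z}/m, M) \cong M[\gcd(n,m)]$, $\operatorname{Bil}(\mathbb{Z}/n \times \mathbb{Z}, M) \cong M[n]$, and $\operatorname{Bil}(\mathbb{Z} \times \mathbb{Z}, M) \cong M$. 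Using the divisibility $n_1 \mid n_2 \mid \cdots \mid n_k$, so that $\gcd(n_i,n_j) = n_i$ for $i < j$, these respectively contribute the parameters $b_{ij} \in M[n_i]$, $t_{ij} \in M[n_i]$, and $f_{jl} \in M$. Assembling the pieces produces the five-term decomposition in the statement, and the explicit formula for $q$ is just the image of the tuple $(m_i, b_{ij}, \ell_j, f_{jl}, t_{ij})$ under the isomorphism; uniqueness is immediate from the direct-sum structure.

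I do not foresee a genuine obstacle: the proposition is essentially a bookkeeping exercise once Lemma \ref{lem:direct_sum_decomposition} is available. The one point that merits care is verifying the inductive step producing a single bilinear cross-term for each unordered pair of summands. This is guaranteed because the bilinear component $f$ appearing in Lemma \ref{lem:direct_sum_decomposition} is linear in each of its two arguments, so when one subsequently splits a summand further, the cross-coupling to previously separated factors remains bilinear rather than generating higher-order interactions; this is what ensures the clean count of summands in the final decomposition.
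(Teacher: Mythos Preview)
Your proposal is correct and follows essentially the same approach as the paper: iterate Lemma~\ref{lem:direct_sum_decomposition} along the cyclic decomposition, invoke Example~\ref{ex:quadratic-forms-examples} for the diagonal pieces, and identify $\operatorname{Bil}(\mathbb{Z}/n\mathbb{Z}\times\mathbb{Z}/m\mathbb{Z},M)\cong M[\gcd(n,m)]$ (together with the divisibility $n_1\mid\cdots\mid n_k$) for the cross-terms. Your write-up in fact supplies more detail than the paper's one-line proof, including the explicit check that iterating the lemma produces exactly one bilinear cross-term per unordered pair of summands.
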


\begin{proof}
It follows by repeated application of Lemma \ref{lem:direct_sum_decomposition}, using that $\operatorname{Quad}(\mathbb{Z}/n\mathbb{Z},M)\cong M[\gcd(2n,n^2)]$ and $\operatorname{Bil}(\mathbb{Z}/n\mathbb{Z} \times \mathbb{Z}/m\mathbb{Z}, M) \cong M[\gcd(n,m)]$.
\end{proof}

Applying Lemma \ref{lem:direct_sum_construction} to a given quadratic form $q \in \operatorname{Quad}(G,M)$ with parameters as in Proposition \ref{prop:quad_structure}, we obtain the corresponding 2-abelian 3-cocycle $(h,c) \in Z^3_{2\text{-ab}}(G,M)$. Following \cite{Braunling21}, a 2-abelian 3-cocycle $(h,c)$ is in \emph{normal form} if $h(x,y,z) = c(x,y) + c(x,z) - c(x,y+z)$ and $h(z,x,y) = c(x+y,z) - c(x,z) - c(y,z)$ for all $x,y,z \in G$.

The work \cite{Braunling21} establishes the existence of normal form representatives for divisible coefficient groups $M$. Our approach provides explicit normal form representatives for arbitrary coefficient groups $M$, extending these results beyond the divisible case. This recovers the explicit formulas from \cite{Braunling21, Quinn, Torrecillas11braided, huang2020explicit, huang2014braided} and provides the following explicit construction.

Let $g = (a_1,\ldots,a_k,x_1,\ldots,x_r)$ and $g' = (a'_1,\ldots,a'_k,x'_1,\ldots,x'_r)$ be elements in $G$. The 2-cochain $c$ is given by

\begin{align*}
c(g,g') = &\overbrace{\sum_{i=1}^k a_i a'_i m_i + \sum_{1 \leq i < j \leq k} a_i a'_j b_{ij}}^{\text{Terms for } T} \\
&+ \overbrace{\sum_{j=1}^r x_j x'_j \ell_j + \sum_{1 \leq j < l \leq r} x_j x'_l f_{jl}}^{\text{Terms for } F} \\
&+ \overbrace{\sum_{i=1}^k \sum_{j=1}^r a_i x'_j t_{ij}}^{\text{Terms for } T \times F}.
\end{align*}

The 3-cocycle $h$ is given by the sum of the base 3-cocycles for the cyclic part of the torsion component:
\begin{equation*}
h(g,g',g'') = \sum_{i=1}^k h_{n_im_i}(a_i,a'_i,a''_i),
\end{equation*}
where $h_{n,m}: (\mathbb{Z}/n\mathbb{Z})^3 \to M$ is the base 3-cocycle
\begin{equation*}
h_{nm}(a,b,c) = \begin{cases}
anm, & \text{if } b + c \geq n, \\
0, & \text{otherwise}.
\end{cases}
\end{equation*}
The parameters correspond to those in Proposition \ref{prop:quad_structure}. 

\subsection{Symmetric quadratic forms and 3-abelian cohomology}

We conclude with the special case of quadratic forms corresponding to symmetric categorical groups. The group $H^3_{3\text{-ab}}(G,M)$ classifies categorical groups with symmetric structure. A 3-abelian 3-cocycle is precisely a 2-abelian 3-cocycle $(h,c)$ with the additional symmetry condition $c(x,y) + c(y,x) = 0$.

This symmetry condition implies $2c(x,x) = 0$ for all $x \in G$. Since $c(x,y) + c(y,x) = b_{\operatorname{Tr}(h,c)}(x,y)$ (see \ref{eq: S matrix}), we conclude that $q \in \operatorname{Quad}(G,M)$ corresponds to a 3-abelian 3-cocycle if and only if $q$ is a group homomorphism, or equivalently, $b_q = 0$. Therefore, the image of the natural inclusion $H^3_{3\text{-ab}}(G,M) \to H^3_{2\text{-ab}}(G,M)$ is precisely $\operatorname{Hom}(G, M[2]) \subset \operatorname{Quad}(G,M)$, the group of symmetric quadratic forms.

\begin{corollary}\label{cor:symmetric_quadratic_forms}
Every symmetric quadratic form $q \in \operatorname{Hom}(G, M[2])$ can be represented as the trace of a bilinear form.
\end{corollary}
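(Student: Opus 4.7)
The plan is to deduce the corollary directly from Theorem \ref{thm:individual_quadratic_criterion}, which characterizes quadratic forms arising as traces of bilinear forms by the vanishing condition $|x|q(x) = 0$ on the $2$-torsion subgroup $G_2$. First I would observe that any $q \in \operatorname{Hom}(G, M[2])$ is genuinely a quadratic form in the sense of the paper: its associated bilinear form $b_q(x,y) = q(x+y) - q(x) - q(y)$ vanishes because $q$ is a homomorphism, and $q(-x) = -q(x) = q(x)$ because $2q(x) = 0$. Hence the earlier criterion applies.

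Next I would verify the torsion hypothesis. Any $x \in G_2$ has order $|x|$ equal to a power of $2$, say $|x| = 2^k$; if $k = 0$ then $x = 0$ and the condition is trivial, while for $k \geq 1$ we have $|x| q(x) = 2^k q(x) = 2^{k-1}(2q(x)) = 0$ since $q(x) \in M[2]$. Consequently $q$ lies in $\operatorname{Quad}_0(G,M)$ and is therefore the trace of some bilinear form by Theorem \ref{thm:individual_quadratic_criterion}.

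There is essentially no obstacle, since the entire content of the corollary is absorbed into the torsion criterion established earlier. If an explicit witness is wanted, one can decompose $G$ into cyclic summands (reducing to the finitely generated case via Lemma \ref{lem:H3_ab_continuous}) and on each $\mathbb{Z}/2^k\mathbb{Z}$ factor take $c(a,b) := a b \, q(1)$; this descends to a well-defined bilinear form precisely because $2 q(1) = 0$ forces $2^k q(1) = 0$, and $c(1,1) = q(1)$ recovers $q$ as required.
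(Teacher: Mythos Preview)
Your proof is correct and follows essentially the same route as the paper: both verify the torsion condition of Theorem \ref{thm:individual_quadratic_criterion} by noting that $2q(x)=0$ (since $q(x)\in M[2]$) and that $|x|$ is a power of $2$ for $x\in G_2$, so $|x|q(x)=0$. The verification that $q$ is a quadratic form and the explicit witness you supply are helpful additions but not needed, as the paper already establishes the former in the paragraph preceding the corollary.
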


\begin{proof}
If $x \in T_2(G)$, then $q(2x) = 2q(x) = 0$ since $q$ is a homomorphism and $q(x) \in M[2]$. In particular, $|x|q(x) = 0$ for all $x \in T_2(G)$.
\end{proof}

This result shows that symmetric quadratic forms have representatives with trivial 3-cocycle component, consistent with \cite[Theorem 2.2]{johnson2012modeling}: any symmetric categorical group admits a skeletal and strict realization.

For a finitely generated abelian group $G \cong \left(\bigoplus_{i=1}^k \mathbb{Z}/n_i\mathbb{Z}\right) \oplus \mathbb{Z}^r$, the group of symmetric quadratic forms decomposes as
\begin{equation*}
\operatorname{Hom}(G, M[2]) \cong \left(\bigoplus_{i: n_i \text{ even}} M[2]\right) \oplus \left(\bigoplus_{j=1}^r M[2]\right),
\end{equation*}
since $\operatorname{Hom}(\mathbb{Z}/n_i\mathbb{Z}, M[2]) = 0$ when $n_i$ is odd. Any symmetric quadratic form $q$ has the unique expression 
\begin{equation*}
q(a_1, \ldots, a_k, x_1, \ldots, x_r) = \sum_{i: n_i \text{ even}} a_i m_i + \sum_{j=1}^r x_j \ell_j
\end{equation*} 
with $m_i, \ell_j \in M[2]$. The corresponding 3-abelian 3-cocycle is $(0, c)$ where 
\begin{equation*}
c((a_1,\ldots,x_r), (a'_1,\ldots,x'_r)) = \sum_{i=1}^k a_i a'_i m_i + \sum_{j=1}^r x_j x'_j \ell_j.
\end{equation*}

Alternatively, symmetric quadratic forms arise as the quotient in the exact sequence
\begin{equation*}
0 \to \operatorname{Hom}(\Lambda^2 G, M) \to \operatorname{Bil}_{\text{skew}}(G \times G, M) \xrightarrow{\operatorname{Tr}} \operatorname{Hom}(G, M[2]) \to 0,
\end{equation*}
where $\operatorname{Bil}_{\text{skew}}(G \times G, M)$ denotes the group of skew-symmetric bilinear forms.
\subsection*{Acknowledgments}
The author wishes to thank BIMSA (Beijing Institute of Mathematical Sciences and Applications) for their hospitality during a visit where this work was initiated. The author is grateful to Sébastien Palcoux and Yi Long Wang for stimulating discussions that motivated this research, and to Christoph Schweigert and Oliver Braunling for valuable comments on an earlier version of this paper.


\end{document}